\documentclass[12pt,oneside]{article}
\usepackage{amsmath,amssymb,amsfonts,amsthm}
\usepackage{color}
\usepackage[all]{xy}
\usepackage{graphicx}
\usepackage{color}
\usepackage{makeidx}
\usepackage{multirow}
\usepackage{rotating}
\usepackage{pdfpages}
\usepackage{pdflscape}
\usepackage{tabularray}
\usepackage{multicol}
\usepackage{enumitem}
 \usepackage{hyperref}

\allowdisplaybreaks

\textheight = 9.4in            %45\baselineskip
\textwidth = 6.8in \leftmargin=1.25in \rightmargin=1.25in
\topmargin=0.7in
\parindent=0.3in
\hoffset -1.8truecm \voffset -3truecm

%%%%%% This is to define \goth %%%%%%%%

%%%%%%%%%%%%%%%%%%%%%%%%%%%%
% MATH ------------------------------- ------------------------------------

%\newcommand{\Complex}{\mathbb C}
%\newcommand{\Field}{\mathbb F}
%\newcommand{\RPlus}{[0,\infty)}

 %(\TM)

%\def\holh#1{{C^\infty_{\hspace{-1pt} hol \hspace{-1pt},#1}}}

%

 %(\TM)

%\def\holh#1{{C^\infty_{\hspace{-1pt} hol \hspace{-1pt},#1}}}

%

%\def\v{{\scalebox{0.8}{\ensuremath{\mathcal{M}}}}_{\scalebox{0.7}{\ensuremath{S}}}}
%\def\smallv{{\scalebox{0.3}{\ensuremath{\mathcal{M}}}}_{\scalebox{0.3}{\ensuremath{S}}}}

%\def\H{{\mathcal H}}

\setlength\arraycolsep{2pt}    %For suitable spacing in "Arrays"

\numberwithin{equation}{section} %% Comment out for sequentially-numbered
\numberwithin{figure}{section} %% Comment out for sequentially-numbered

\theoremstyle{plain}
\newtheorem*{theorem*}{Theorem}
\newtheorem{theorem}{Theorem}[section]
\newtheorem{lemma}[theorem]{Lemma}
\newtheorem{proposition}[theorem]{Proposition}

\newtheorem{remark}[theorem]{Remark}

\theoremstyle{definition}
\newtheorem{definition}[theorem]{Definition}

\theoremstyle{remark}
\newtheorem{example}{Example}
\newtheorem*{acknowledgement*}{Acknowledgement}

\numberwithin{equation}{section}
            %To remove the enumeration

\newcommand\overcirc[1]{\raisebox{10pt}{\tiny{$\circ$}}{\kern-7.5pt}\mbox{$#1$}}
\newcommand\undersym[2]{\raisebox{-6pt}{$#2$}{\kern-5pt}\mbox{$#1$}}
\newcommand\overdiamond[1]{\raisebox{10pt}{\small$\star$}{\kern-7.5pt}\mbox{$#1$}}
\newcommand\overast[1]{\raisebox{10pt}{\small$\ast$}{\kern-7.5pt}\mbox{$#1$}}
\newcommand\overlind[1]{\raisebox{10pt}{\small$\overline{{\hspace{2pt}}\star}$}{\kern-7.5pt}\mbox{$#1$}}
\newcommand\overlinc[1]{\raisebox{10pt}{\small$\overline{{\hspace{2pt}}\circ}$}{\kern-7.5pt}\mbox{$#1$}}
\newcommand\overlina[1]{\raisebox{10pt}{\small$\overline{{\hspace{1pt}}\ast}$}{\kern-7.5pt}\mbox{$#1$}}

\begin{document}

\title{\bf The T-tensor of spherically symmetric Finsler metrics
}

 \author{ Salah G. Elgendi }
\date{}

\maketitle

\begin{center}
  Department of Mathematics, Faculty of Science,
  \\
  Islamic University of Madinah, Madinah, Saudi Arabia
\end{center}

\begin{center}
     salah.ali@fsc.bu.edu.eg , \ salahelgendi@yahoo.com
\end{center}

\vspace{0.3cm}

\begin{abstract}
This paper is devoted to the study of the T-tensor associated with  a spherically symmetric Finsler metric $F=u\phi(r,s)$ on \(\mathbb{R}^n\). We derive a general expression for the T-tensor in terms of the scalar function \(\phi(r, s)\) and its partial derivatives. Furthermore, we characterize all spherically symmetric Finsler metrics satisfying the so-called T-condition, that is, those for which the T-tensor vanishes. In addition, we obtain the formula for the mean Cartan tensor and demonstrate that all spherically symmetric Finsler metrics  of dimension $n \geq 3$, with a non-zero mean Cartan tensor are quasi-C-reducible.
\end{abstract}

%\medskip
 \noindent{\bf Keywords:\/}\, Cartan tensor; mean Cartan tensor; Quasi-C-reducible; spherically symmetric metrics, T-tensor; T-condition.

%\medskip
\medskip\noindent{\bf MSC 2020:\/}  53B40; 53C60.

%\bigskip

\section{Introduction}
~\par 
Spherically symmetric Finsler metrics have attracted significant attention due to their geometric simplicity and wide applicability in both pure and applied contexts. A Finsler metric \(F\) on an open subset of \(\mathbb{R}^n\) is called spherically symmetric if it can be expressed in the form
\[
F(x, y) = u\,\phi(r, s), \quad r = |x|,\quad s = \frac{\langle x, y \rangle}{u}, \quad u=|y|
\]
where \(\phi\) is a smooth, positive function. These metrics naturally generalize the classical Riemannian notion of rotational symmetry and serve as key models in various physical theories, including generalizations of gravitational and cosmological frameworks \cite{Rutz1993, Pfeifer2012}. Their structure allows for the explicit computation of geometric invariants, making them ideal for analyzing curvature properties and variational behavior in Finsler geometry \cite{Zhou_Mo}.

\medskip

The T-tensor is one of the fundamental non-Riemannian curvature tensors in Finsler geometry. Introduced as a component of the curvature decomposition, it plays a crucial role in characterizing Landsberg and Berwald spaces \cite{Elgendi-LBp,Elgendi2021,Elgendi-ST_condition}. From a physical perspective, the T-tensor also emerges in the context of Finslerian extensions of general relativity, where it contributes to the anisotropic corrections of geodesic deviation and field equations \cite{Asanov1985, Vacaru2012}. Despite its significance, explicit computations of the T-tensor are rare and are usually confined to highly symmetric or special cases.

\bigskip

The structure of the paper is as follows. In Section 3, we investigate key geometric quantities associated with spherically symmetric Finsler metrics, with particular focus on the Cartan tensor and the mean Cartan tensor. These tensors play a central role in characterizing the non-Riemannian nature of Finsler spaces. We begin by deriving explicit expressions for these tensors in terms of the underlying metric function. Building on these computations, we establish a significant structural property: namely, that any spherically symmetric Finsler metric with a non-zero mean Cartan tensor and  of dimension $n \geq 3$  must be quasi-C-reducible.  In Section 4, we derive an explicit formula for the \(T\)-tensor associated with a spherically symmetric Finsler metric, and discuss several notable special cases. Section 5 is devoted to the study of the \(T\)-condition. In particular, we characterize all spherically symmetric Finsler metrics, of dimension $n \geq 3$, satisfying the \(T\)-condition, showing that they must take the form
\[
\phi(r,s) =
a(r)\, s^{\frac{c(r)\, r^2 - 1}{c(r)\, r^2}} ( r^2 -  s^2)^{\frac{1}{2 c(r) r^2}},
\]
where \(a(r)\) and \(c(r)\) are smooth functions of the radial variable \(r\). Two examples of spherically symmetric Finsler metric of Kropina-type and Randers-type are considered.

\section{Preliminaries} 
~\par 
   Spherically symmetric Finsler metrics constitute a notable class of Finsler structures characterized by rotational invariance around the origin. Defined on an open ball $\mathbb{B}^n(r_0) \subset \mathbb{R}^n$, such a metric can be written in the form
\[
F(x, y) = u\, \phi(r, s), \quad \text{where} \quad r = |x|,\; u = |y|,\; s = \frac{\langle x, y \rangle}{|y|},
\]
with $\phi$ a smooth positive function defined on $[0, r_0) \times \mathbb{R}$, and where $\langle \cdot, \cdot \rangle$ and $|\cdot|$ denote the standard Euclidean inner product and norm, respectively. 
 A spherically symmetric metric of the form \(F = u\,\phi(r,s)\), defined on the ball \(\mathbb{B}^n(r_0) \subset \mathbb{R}^n\), is regular if and only if the function \(\phi(r,s)\) is a positive, smooth function satisfying the following conditions:
\begin{equation*}
\label{Regular_condition}
\phi - s \phi_s > 0, \quad \phi - s \phi_s + (r^2 - s^2) \phi_{ss} > 0.
\end{equation*}

These metrics were studied extensively in \cite{MoZhou2009, MoZhouZhu2010}, where their geometric and curvature properties were analyzed in depth.

The spherical symmetry guarantees invariance under the orthogonal group $O(n)$, allowing for simplifications in the study of curvature conditions under rotational symmetry. Such metrics generalize radial Riemannian structures and are particularly useful in constructing explicit examples of Finsler spaces satisfying special geometric conditions, such as Landsberg or Berwald metrics \cite{Elgendi2021-SSM,Elgendi2023-SSM,Zhou_Mo,Tayebi-et.}.

Because of their symmetry, the associated geometric quantities often depend only on $r$ and $s$, this makes spherically symmetric Finsler metrics a valuable framework for the analytic investigation of Finsler geometry.

\bigskip

     Since the components of the metric tensor corresponding to the Euclidean norm are given by the Kronecker delta, i.e., $\delta_{ij}$, we can lower the indices of   $y^i$ and $x^i$ using:

    $$y_i := \delta_{ih} y^h, \quad     x_i := \delta_{ih} x^h.$$

Note that, due to the simplicity of the Euclidean metric, this lowering of indices does not alter the components; hence, $y_i = y^i$ and $x_i = x^i$. Therefore, in this paper,  we emphasize that $y_i \neq F\frac{\partial F}{\partial y^i}$; rather,  $y_i = u\frac{\partial u}{\partial y^i}$.
  
  \bigskip

The components $g_{ij}$ of the metric tensor of the spherically symmetric metric $F=u \phi(r,s)$ are given by  
\begin{equation}
\label{Eq:g_ij}
g_{ij}=\sigma_0\ \delta_{ij}+\sigma_1\  x_ix_j+\frac{\sigma_2}{u} (x_iy_j+x_jy_i)+\frac{\sigma_3}{u^2}y_iy_j,
\end{equation}
where  $$\sigma_0=\phi(\phi-s\phi_s),\quad \sigma_1= \phi_s^2+\phi\phi_{ss},\quad \sigma_2= (\phi-s\phi_s)\phi_s-s\phi\phi_{ss},  \quad \sigma_3= s^2\phi\phi_{ss}-s(\phi-s\phi_s)\phi_s.$$

 Throughout this paper, subscript  \(s\)   denotes partial derivative  with respect to the variable  \(s\).

 \medskip

The following geometric objects can be found, for example,  in \cite{Zhou_Mo}. 
The components $g^{ij}$ of the inverse metric tensor are given  as follows
\begin{align}
\label{Eq:g^ij}
g^{ij}=&\rho_0\delta^{ij}+ \frac{\rho_1}{u^2}y^iy^j+ \frac{\rho_2}{u} (x^iy^j+x^jy^i)+\rho_3x^ix^j,
\end{align}
where
$$
 \rho_0=\frac{1}{\phi(\phi-s\phi_s)}, \quad \rho_1=\frac{(s\phi+(r^2-s^2)\phi_s)(\phi\phi_s-s\phi_s^2-s\phi\phi_{ss})}{\phi^3(\phi-s\phi_s)(\phi-s\phi_s+(r^2-s^2)\phi_{ss})},$$
 $$\rho_2=-\frac{\phi\phi_s-s\phi_s^2-s\phi\phi_{ss}}{\phi^2(\phi-s\phi_s)(\phi-s\phi_s+(r^2-s^2)\phi_{ss})},\quad \rho_3=-\frac{\phi_{ss}}{\phi(\phi-s\phi_s)(\phi-s\phi_s+(r^2-s^2)\phi_{ss})}.$$

    \medskip

For our subsequent investigation, we introduce the covector
\[
m_i = x_i - \frac{s}{u} y_i,
\]
which will play a fundamental role in the expressions of various geometric objects.

\begin{remark}
The covector \( m_i \) satisfies the property \( m_i \neq 0 \). Indeed, suppose that \( m_i = 0 \). Then we have
\[
x_i - \frac{s}{u} y_i = 0.
\]
Differentiating  the above equation  with respect to \( y^j \), we obtain
\[
\frac{s}{u} \left( \delta_{ij} - \frac{1}{u^2} y_i y_j \right) = 0.
\]
Contracting this equation with \( \delta^{ij} \) yields
\[
\frac{s}{u} (n - 1) = 0.
\]
Since \( n \neq 1 \), it follows that \( s = 0 \), and hence \( x_i = 0 \), which contradicts the assumption that \( x \in \mathbb{B}^n(r_0) \setminus \{0\} \). Therefore, \( m_i \neq 0 \). Similarly, one can show that \( s \neq 0 \) and \( r^2 - s^2 \neq 0 \).
\end{remark}

The angular metric of the Kronecker delta is defined by 
  $$\hbar_{ij} = \delta_{ij} - \frac{1}{u^2} y_i y_j.$$

  In what follow, for simplicity, we use the notations $ m^2:=r^2-s^2$ and   $\mu:=\sigma_1$. That is, $\mu_s=(\sigma_1)_s$, $\mu_{ss}=(\sigma_1)_{ss}$. 
  We have the following identities: 
 \begin{equation}
 \label{Identites_sigmas}
 (\sigma_0)_s=\sigma_2, \,\,\, (\sigma_2)_s=-s\mu_s, \,\,\, (\sigma_3)_s=s^2\mu_s-\sigma_2, \,\, \sigma_3=-s\sigma_2.
 \end{equation}
 
 \begin{equation}
 \label{Identites_m^2}
 m^2=m^im_i=x^im_i, \quad x^r\hbar_{ri}=m_i, \quad y^im_i=0, \quad y_im^i=0.
 \end{equation}
 
 \begin{equation}
 \label{Identites_hbar}
\frac{\partial  }{\partial y^k} \frac{y_i}{u}=\frac{1}{u}(\delta_{ik}-\frac{1}{u^2}y_iy_k)=\frac{1}{u}\hbar_{ik}.
 \end{equation}

\section{The Cartan tensor }

~\par
  
 In this section, we investigate key geometric quantities associated with spherically symmetric Finsler metrics, with particular focus on the Cartan tensor and the mean Cartan tensor. Then, we show that  any spherically symmetric Finsler metric with a non-vanishing mean Cartan tensor must be quasi-C-reducible.

  \begin{lemma}\label{C_ijk}
Let $F$ be a spherically symmetric Finsler metric. Then the components of the Cartan tensor, given by $C_{ijk} = \frac{1}{2} \frac{\partial g_{ij}}{\partial y^k}$, can be expressed as
\begin{equation}
\label{Eq:Cartan_tensor}C_{ijk} = \frac{\sigma_2}{2u} \left( \hbar_{ij} m_k + \hbar_{jk} m_i + \hbar_{ik} m_j \right) + \frac{\mu_s}{2u} m_i m_j m_k.
\end{equation}
Moreover, the (1,2)-type form of the Cartan tensor, denoted by $C^r_{jk}$, is given by
 \begin{align}
 \label{Eq:Cartan_1_2}
\nonumber C^r_{jk}=&     \frac{\rho_0  \sigma_2}{2u} \left( \hbar^r_{ j} m_k  + \hbar^r_{k} m_j \right) +\left( \frac{\rho_0  \sigma_2}{2u}  \hbar_{jk}   + \frac{\rho_0 \mu_s}{2u}  m_j m_k\right)  m^r\\
&+ \left(   \frac{\rho_2\sigma_2 m^2}{2u^2}  \hbar_{jk}     +  \frac{2\rho_2\sigma_2+\rho_2 \mu_sm^2}{2u^2}  m_j m_k \right)y^r  \\ 
\nonumber &+   \left(   \frac{\rho_3\sigma_2 m^2}{2u}  \hbar_{jk}     +  \frac{2\rho_3\sigma_2+\rho_3 \mu_sm^2}{2u}  m_j m_k \right)x^r  .
   \end{align}
\end{lemma}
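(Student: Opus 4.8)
The plan is to establish \eqref{Eq:Cartan_tensor} by differentiating \eqref{Eq:g_ij} directly, and then to obtain \eqref{Eq:Cartan_1_2} by raising one index with the inverse metric \eqref{Eq:g^ij}. For the first part, I would differentiate each of the four summands of $g_{ij}$ with respect to $y^k$. The only non-trivial derivatives needed are $\partial r/\partial y^k=0$ (since $r=|x|$), $\partial s/\partial y^k = m_k/u$ (which follows from $s=\langle x,y\rangle/u$ and $m_k=x_k-\tfrac{s}{u}y_k$), $\partial y_i/\partial y^k=\delta_{ik}$, the identity \eqref{Identites_hbar} giving $\partial(y_i/u)/\partial y^k=\hbar_{ik}/u$, and its consequence $\partial(u^{-2})/\partial y^k=-2y_k/u^4$. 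Applying the chain rule together with the derivative relations $(\sigma_0)_s=\sigma_2$, $(\sigma_2)_s=-s\mu_s$, $(\sigma_3)_s=s^2\mu_s-\sigma_2$ and $\sigma_3=-s\sigma_2$ of \eqref{Identites_sigmas}, one gets $\partial g_{ij}/\partial y^k$ as an explicit combination of the terms $\delta_{ij}m_k$, $x_ix_jm_k$, $(x_iy_j+x_jy_i)m_k$, $x_i\hbar_{jk}+x_j\hbar_{ik}$, $y_iy_jm_k$ and $\hbar_{ik}y_j+\hbar_{jk}y_i$.

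To bring this into the stated intrinsic form I would then substitute $x_i=m_i+\tfrac{s}{u}y_i$ and $\delta_{ij}=\hbar_{ij}+\tfrac{1}{u^2}y_iy_j$, expand, and collect. After substitution the surviving contributions are exactly $\tfrac{\sigma_2}{u}\bigl(\hbar_{ij}m_k+\hbar_{jk}m_i+\hbar_{ik}m_j\bigr)$ and $\tfrac{\mu_s}{u}m_im_jm_k$, whereas every other term — those proportional to $m_ky_iy_j$, to $m_k(m_iy_j+m_jy_i)$, and to $y_i\hbar_{jk}+y_j\hbar_{ik}$ — cancels identically, the cancellations being forced precisely by the relations in \eqref{Identites_sigmas} (for example the $m_ky_iy_j$ coefficient collapses since $\sigma_2+s^2\mu_s-2s^2\mu_s+(s^2\mu_s-\sigma_2)=0$). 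This cancellation bookkeeping is the only step that requires genuine care, and it is entirely routine; the factor $\tfrac12$ in $C_{ijk}=\tfrac12\,\partial g_{ij}/\partial y^k$ then yields \eqref{Eq:Cartan_tensor}.

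For the second part, I would contract \eqref{Eq:Cartan_tensor} with $g^{ri}$ as given in \eqref{Eq:g^ij}, using the contraction identities of \eqref{Identites_m^2} and \eqref{Identites_hbar}: $\delta^{ri}m_i=m^r$, $\delta^{ri}\hbar_{ij}=\hbar^r_j$, $y^im_i=0$, $y^i\hbar_{ij}=0$, $x^im_i=m^2$ and $x^i\hbar_{ij}=m_j$. The term $\tfrac{\rho_1}{u^2}y^ry^i$ of $g^{ri}$ drops out completely since $y^iC_{ijk}=0$, and the $x^ry^i$ half of the mixed term drops out for the same reason. The $\rho_0\delta^{ri}$ term reproduces the first line together with the $\rho_0$-part of the second line of \eqref{Eq:Cartan_1_2}; the remaining half $\tfrac{\rho_2}{u}x^iy^r$ of the mixed term and the $\rho_3x^rx^i$ term both act through $x^iC_{ijk}=\tfrac{\sigma_2}{2u}\bigl(2m_jm_k+m^2\hbar_{jk}\bigr)+\tfrac{\mu_s m^2}{2u}m_jm_k$, and reading off their coefficients produces the $y^r$-line and the $x^r$-line of \eqref{Eq:Cartan_1_2} respectively. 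Assembling these four pieces completes the proof; here no further obstacle arises beyond careful indexing.
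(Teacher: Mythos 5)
Your proposal is correct and follows essentially the same route as the paper: differentiate \eqref{Eq:g_ij} using $\partial s/\partial y^k=m_k/u$ and the identities \eqref{Identites_sigmas}, rewrite in terms of $\hbar_{ij}$ and $m_i$ (your explicit substitution $x_i=m_i+\tfrac{s}{u}y_i$, $\delta_{ij}=\hbar_{ij}+\tfrac{1}{u^2}y_iy_j$ with the cancellation check is exactly the step the paper compresses into ``rewriting''), and then contract with $g^{ri}$ using $y^iC_{ijk}=0$ and $x^iC_{ijk}=\tfrac{\sigma_2}{2u}\bigl(2m_jm_k+m^2\hbar_{jk}\bigr)+\tfrac{\mu_s m^2}{2u}m_jm_k$. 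No gaps; your verified cancellations in fact confirm the coefficients stated in the lemma (the paper's intermediate display $2C_{ijk}=\tfrac{2\sigma_2}{u}(\cdots)+\tfrac{2\mu_s}{u}m_im_jm_k$ carries a spurious factor of $2$, and your bookkeeping gives the correct final formula \eqref{Eq:Cartan_tensor}).
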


\begin{proof}
Differentiating the expression for $g_{ij}$ in \eqref{Eq:g_ij} with respect to $y^k$, and noting that
\(
\frac{\partial s}{\partial y^k} = \frac{1}{u} m_k,
\)
we obtain:
\begin{align*}
2 C_{ijk} &= \frac{(\sigma_0)_s}{u} \delta_{ij} m_k + \frac{\mu_s}{u} x_i x_j m_k + \frac{(\sigma_2)_s}{u^2} (x_i y_j + x_j y_i) m_k + \frac{\sigma_2}{u} (x_i \hbar_{jk} + x_j \hbar_{ik}) \\
&\quad + \frac{(\sigma_3)_s}{u^3} y_i y_j m_k + \frac{\sigma_3}{u^2} (y_i \hbar_{jk} + y_j \hbar_{ik}).
\end{align*}

Using \eqref{Identites_sigmas}, the above formula  takes the form
\begin{align*}
2 C_{ijk} &= \frac{\sigma_2}{u} \delta_{ij} m_k + \frac{\mu_s}{u} x_i x_j m_k - \frac{s \mu_s}{u^2} (x_i y_j + x_j y_i) m_k + \frac{\sigma_2}{u} (x_i \hbar_{jk} + x_j \hbar_{ik}) \\
&\quad + \frac{s^2 \mu_s - \sigma_2}{u^3} y_i y_j m_k - \frac{s \sigma_2}{u^2} (y_i \hbar_{jk} + y_j \hbar_{ik}).
\end{align*}
Rewriting this in terms of $\hbar_{ij}$ and $m_i$ yields 
\[
2 C_{ijk} = \frac{2 \sigma_2}{u} \left( \hbar_{ij} m_k + \hbar_{jk} m_i + \hbar_{ik} m_j \right) + \frac{2\mu_s}{u} m_i m_j m_k,
\]
which implies the desired result.

Now, using the inverse metric $g^{ij}$  \eqref{Eq:g^ij} together with \eqref{Identites_m^2}, we have  
 \begin{align*}
C^r_{jk}&=g^{ir}C_{ijk}\\
&=(\rho_0\delta^{ir}+ \frac{\rho_1}{u^2}y^iy^r+ \frac{\rho_2}{u} (x^iy^r+x^ry^i)+\rho_3x^ix^r)C_{ijk}\\
&=(\rho_0\delta^{ir}+ \frac{\rho_2}{u} x^iy^r+\rho_3x^ix^r)C_{ijk}\\
&=(\rho_0\delta^{ir}+ \frac{\rho_2}{u} x^iy^r+\rho_3x^ix^r) \left(  \frac{\sigma_2}{2u} \left( \hbar_{ij} m_k + \hbar_{jk} m_i + \hbar_{ik} m_j \right) + \frac{\mu_s}{2u} m_i m_j m_k \right)\\
&=     \frac{\rho_0  \sigma_2}{2u} \left( \hbar^r_{ j} m_k  + \hbar^r_{k} m_j \right) +\left( \frac{\rho_0  \sigma_2}{2u}  \hbar_{jk}   + \frac{\rho_0 \mu_s}{2u}  m_j m_k\right)  m^r\\
&+ \left(   \frac{\rho_2\sigma_2 (r^2-s^2)}{2u^2}  \hbar_{jk}     +  \frac{2\rho_2\sigma_2+\rho_2 \mu_s(r^2-s^2)}{2u^2}  m_j m_k \right)y^r  \\ 
&+   \left(   \frac{\rho_3\sigma_2 (r^2-s^2)}{2u}  \hbar_{jk}     +  \frac{2\rho_3\sigma_2+\rho_3 \mu_s(r^2-s^2)}{2u}  m_j m_k \right)x^r  .
   \end{align*}
   Which completes the proof.    
\end{proof}
\begin{remark}
It should be noted  that, in the above proof, we have used the fact that $y^rC_{rjk}=0$. But  $C^r_{ij} y_r\neq0$ since here we have $y_i=\delta_{ij} y^j$ and  $y_i\neq g_{ij} y^j$. 
\end{remark}

\begin{lemma}
The mean Cartan tensor $C_i:=g^{jk}C_{ijk}$ is given by
\begin{equation}
\label{Eq:C_i}
C_i=\mathcal{A}\,  m_i,
\end{equation}
where 
\begin{equation}
\label{Eq:A_Quasi}
\mathcal{A}:= \frac{1 }{2u}  \left( \rho_0\sigma_2 (n+1)    +  \rho_0\mu_s  m^2      + 3\rho_3\sigma_2  m^2    +  \rho_3\mu_s  m^4 \right).
\end{equation}
\end{lemma}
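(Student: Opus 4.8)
The plan is to contract the Cartan tensor formula \eqref{Eq:Cartan_tensor} with the inverse metric $g^{jk}$ from \eqref{Eq:g^ij} and collect terms. Since the structure of $C_{ijk}$ is a sum of pieces built from $\hbar_{ij}$ and $m_i$, and since by \eqref{Identites_m^2} we have $y^i m_i = 0$ while $x^i m_i = m^2$, only the $\delta^{jk}$, $x^jx^k$ terms of $g^{jk}$ can survive contraction against the $m_j$ or $m_k$ factors; the $y^iy^j$ and $x^iy^j$ pieces will either vanish outright or drop out after using $y^j m_j = 0$. So effectively $C_i = (\rho_0\delta^{jk} + \rho_3 x^j x^k)C_{ijk}$, mirroring the reduction already carried out in the proof of Lemma \ref{C_ijk}.

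First I would compute the needed contractions of the building blocks. From $\hbar_{jk} = \delta_{jk} - u^{-2}y_jy_k$ one gets $\delta^{jk}\hbar_{jk} = n-1$ (using $y^jy_j = u^2$), $\delta^{jk}m_jm_k = m^2$, $x^jx^k\hbar_{jk} = x^jm_k\delta^{k}_{\,j}$-type reductions giving $x^j x^k \hbar_{jk} = m^2$ (since $x^r\hbar_{rj} = m_j$ and $x^jm_j = m^2$), and $x^jx^k m_jm_k = (m^2)^2 = m^4$. Also $\hbar_{ij}\delta^{jk}m_k = m_i$, $\hbar_{ij}x^jx^km_k = m^2 m_i$, $\hbar_{ik}\delta^{jk}m_j = m_i$, $\hbar_{ik}x^ix^km_j$ similarly, and $m_im_jm_k\delta^{jk} = m^2 m_i$, $m_im_jm_k x^jx^k = m^4 m_i$. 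Substituting these into
$$
C_i = \Big(\rho_0\delta^{jk} + \rho_3 x^jx^k\Big)\left[\frac{\sigma_2}{2u}\big(\hbar_{ij}m_k + \hbar_{jk}m_i + \hbar_{ik}m_j\big) + \frac{\mu_s}{2u}m_im_jm_k\right]
$$
produces, from the $\rho_0$ part, $\frac{\rho_0\sigma_2}{2u}\big(m_i + (n-1)m_i + m_i\big) + \frac{\rho_0\mu_s}{2u}m^2 m_i = \frac{\rho_0\sigma_2(n+1)}{2u}m_i + \frac{\rho_0\mu_s m^2}{2u}m_i$, and from the $\rho_3$ part, $\frac{\rho_3\sigma_2}{2u}\big(m^2 m_i + m^2 m_i + m^2 m_i\big) + \frac{\rho_3\mu_s m^4}{2u}m_i = \frac{3\rho_3\sigma_2 m^2}{2u}m_i + \frac{\rho_3\mu_s m^4}{2u}m_i$. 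Adding these and factoring out $m_i$ yields exactly $C_i = \mathcal{A}\,m_i$ with $\mathcal{A}$ as in \eqref{Eq:A_Quasi}. One should also note the vanishing of the omitted $g^{jk}$ terms: the $u^{-2}\rho_1 y^jy^k$ and $u^{-1}\rho_2(x^jy^k + x^ky^j)$ contributions vanish because each surviving index contracts into some $m_\bullet$ or into $\hbar_{\bullet k}$ paired with $y^k$, and $y^k m_k = 0$, $y^k\hbar_{\bullet k}$-against-$m$ also collapses.

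The main obstacle is purely bookkeeping: making sure every one of the three symmetrized $\hbar\cdot m$ terms is contracted correctly on both its free-index slot and its dummy slots, and not double-counting or dropping the distinction between $\hbar_{ij}m_k$ (where the $m_k$ meets the inverse metric) versus $\hbar_{ik}m_j$ (same) versus $\hbar_{jk}m_i$ (where it is the $\hbar$ that meets the inverse metric and $m_i$ is the free surviving covector). A clean way to organize this is to first record the contraction identities as a short displayed list, then substitute mechanically. I expect no conceptual difficulty beyond that; the result $C_i = \mathcal{A} m_i$ is forced by the fact that $m_i$ is, up to the fixed tensors $y_i$ (killed by tracelessness structure) the only available covector in the problem, so the only real content is the explicit value of the coefficient $\mathcal{A}$.
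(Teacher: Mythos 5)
Your proposal is correct and follows essentially the same route as the paper: contract \eqref{Eq:Cartan_tensor} with \eqref{Eq:g^ij}, discard the $\rho_1$ and $\rho_2$ pieces via $y^jm_j=0$ and $y^j\hbar_{jk}=0$, and evaluate the surviving $\rho_0\delta^{jk}+\rho_3x^jx^k$ contractions, which yield exactly the coefficient $\mathcal{A}$ in \eqref{Eq:A_Quasi}. The only cosmetic difference is that the paper contracts over the first two indices ($C_k=g^{ij}C_{ijk}$), which is equivalent by total symmetry of $C_{ijk}$.
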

\begin{proof}
Using the inverse metric tensor \eqref{Eq:g^ij} together with the Cartan tensor \eqref{C_ijk}, we have
\begin{align*}
C_k=&g^{ij}C_{ijk}\\
=&\left( \rho_0\delta^{ij}+ \frac{\rho_1}{u^2}y^iy^j+ \frac{\rho_2}{u} (x^iy^j+x^jy^i)+\rho_3x^ix^j \right)C_{ijk}\\
=&\left( \rho_0\delta^{ij}+\rho_3x^ix^j \right)C_{ijk}\\
=&\left( \rho_0\delta^{ij}+\rho_3x^ix^j \right)\left( \frac{\sigma_2}{2u} \left( \hbar_{ij} m_k + \hbar_{jk} m_i + \hbar_{ik} m_j \right) + \frac{\mu_s}{2u} m_i m_j m_k  \right)\\
=&  \frac{\rho_0\sigma_2 (n+1) }{2u}   m_k  + \frac{\rho_0\mu_s m^2}{2u}   m_k+\frac{3\rho_3\sigma_2 m^2 }{2u}   m_k  + \frac{\rho_3\mu_s m^4}{2u}   m_k\\
=&\frac{1 }{2u}  \left( \rho_0\sigma_2 (n+1)    +  \rho_0\mu_s m^2     + 3\rho_3\sigma_2 m^2   +  \rho_3\mu_s m^4 \right)   m_k\\
=& \mathcal{A} \, m_k.
\end{align*}
  \end{proof}

\begin{definition}
Let $(M,F)$ be a Finsler space of dimension $n \geq 3$. The space is called \emph{quasi-C-reducible} if its Cartan tensor $C_{ijk}$ satisfies the relation:
\begin{equation}\label{paper2.8}
C_{ijk} = Q_{ij} C_k + Q_{jk} C_i + Q_{ki} C_j,
\end{equation}
where $Q_{ij}$ is a symmetric indicatory tensor.
\end{definition}

\begin{theorem}
Let $F = u\,\phi(r,s)$ be a spherically symmetric Finsler metric   of dimension $n \geq 3$.     If the mean Cartan tensor  is is non-zero, then $F$ is quasi-C-reducible.
\end{theorem}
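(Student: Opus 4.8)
The plan is to produce an explicit symmetric indicatory tensor $Q_{ij}$ for which the quasi-C-reducibility relation \eqref{paper2.8} holds, using the already-established formulas. The starting point is the Cartan tensor \eqref{Eq:Cartan_tensor}, which is a symmetrized combination of $\hbar_{ij}m_k$-terms and an $m_im_jm_k$-term, and the mean Cartan tensor \eqref{Eq:C_i}, $C_i = \mathcal{A}\,m_i$, where $\mathcal{A}$ is given by \eqref{Eq:A_Quasi}. Since $C_i$ is a nonzero scalar multiple of $m_i$, the right-hand side $Q_{ij}C_k + Q_{jk}C_i + Q_{ki}C_j$ equals $\mathcal{A}\,(Q_{ij}m_k + Q_{jk}m_i + Q_{ki}m_j)$. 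Comparing this with \eqref{Eq:Cartan_tensor}, it is natural to try an ansatz of the form
\[
Q_{ij} = \alpha\,\hbar_{ij} + \beta\, m_i m_j
\]
for scalar functions $\alpha = \alpha(r,s)$ and $\beta = \beta(r,s)$ to be determined.

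First I would substitute this ansatz into $\mathcal{A}\,(Q_{ij}m_k + Q_{jk}m_i + Q_{ki}m_j)$ and expand. The $\hbar$-part contributes $\mathcal{A}\alpha(\hbar_{ij}m_k + \hbar_{jk}m_i + \hbar_{ik}m_j)$, which must match the first bracket of \eqref{Eq:Cartan_tensor}; this forces
\[
\mathcal{A}\,\alpha = \frac{\sigma_2}{2u}.
\]
The $m_im_jm_k$-part: each of the three terms $\beta m_i m_j m_k$ appears, giving $3\mathcal{A}\beta\, m_im_jm_k$, which must match $\frac{\mu_s}{2u} m_im_jm_k$, so
\[
3\mathcal{A}\,\beta = \frac{\mu_s}{2u}.
\]
Hence one sets $\alpha = \dfrac{\sigma_2}{2u\mathcal{A}}$ and $\beta = \dfrac{\mu_s}{6u\mathcal{A}}$, which is legitimate precisely because $\mathcal{A}\neq 0$ (equivalently $C_i \neq 0$), the hypothesis of the theorem. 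This is the step where the non-vanishing of the mean Cartan tensor is essential: it is what makes the division well-defined.

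Next I would verify the two required properties of $Q_{ij}$: symmetry is immediate since both $\hbar_{ij}$ and $m_im_j$ are symmetric. For the indicatory property one needs $y^i Q_{ij} = 0$; this follows from the identities in \eqref{Identites_m^2} and \eqref{Identites_hbar}, namely $y^i\hbar_{ij} = y^i\delta_{ij} - \frac{1}{u^2}y^iy_iy_j = y_j - y_j = 0$ (since $y^iy_i = u^2$) and $y^i m_i = 0$. Finally I would record that with these choices of $\alpha,\beta$ the identity \eqref{paper2.8} holds term by term, completing the proof. The main obstacle here is not conceptual but bookkeeping: one must be careful that the ansatz $Q_{ij} = \alpha\hbar_{ij} + \beta m_im_j$ is rich enough — and indeed it is, because the Cartan tensor \eqref{Eq:Cartan_tensor} has exactly the same two-term tensorial structure — and one must double-check the combinatorial factor $3$ in the $m_im_jm_k$-term and the fact that only the $\hbar$-piece of $Q$ feeds the mixed $\hbar m$ terms while only the $mm$-piece feeds the pure $mmm$ term (there is no cross-contamination since $\hbar_{ij}m_k$ cannot produce an $m_im_jm_k$ term and vice versa). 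Once that is seen, the argument is essentially a two-line matching of coefficients.
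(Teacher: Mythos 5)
Your proposal is correct and follows essentially the same route as the paper: since $C_i=\mathcal{A}\,m_i$ with $\mathcal{A}\neq 0$, one regroups the two-term structure of \eqref{Eq:Cartan_tensor} into the form \eqref{paper2.8} with $Q_{ij}=\frac{\sigma_2}{2u\mathcal{A}}\hbar_{ij}+\frac{\mu_s}{6u\mathcal{A}}m_im_j$, which coincides with the paper's $Q_{ij}=\frac{\sigma_2}{2u\mathcal{A}}\hbar_{ij}+\frac{(\sigma_1)_s}{6u\mathcal{A}^3}C_iC_j$ after substituting $C_iC_j=\mathcal{A}^2m_im_j$. Your explicit check of symmetry and the indicatory property $y^iQ_{ij}=0$ is a welcome addition that the paper leaves implicit.
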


\begin{proof}
For spherically symmetric Finsler metrics with non-zero mean Cartan tensor ($\mathcal{A}\neq 0$), we can write
$$m_i=\frac{1}{\mathcal{A}}C_i.$$
Now, we have

 \begin{align*}
C_{ijk}=&  \frac{\sigma_2}{2u\mathcal{A}} \left( \hbar_{ij} C_k + \hbar_{jk} C_i + \hbar_{ik} C_j \right) + \frac{(\sigma_1)_s}{2u\mathcal{A}^3} C_i C_j C_k  \\
=&\left( \frac{\sigma_2}{2u\mathcal{A}}   \hbar_{ij}+ \frac{(\sigma_1)_s}{6u\mathcal{A}^3} C_i C_j\right) C_k + \left( \frac{\sigma_2}{2u\mathcal{A}}   \hbar_{jk}+ \frac{(\sigma_1)_s}{6u\mathcal{A}^3} C_j C_k\right) C_i+\left( \frac{\sigma_2}{2u\mathcal{A}}   \hbar_{ki}+ \frac{(\sigma_1)_s}{6u\mathcal{A}^3} C_k C_i\right) C_j\\
=&Q_{ij}C_k+Q_{jk}C_i+Q_{ki}C_j,
\end{align*}
where $$Q_{ij}:=\frac{\sigma_2}{2u\mathcal{A}}   \hbar_{ij}+ \frac{(\sigma_1)_s}{6u\mathcal{A}^3} C_i C_j.$$
This completes the proof.
\end{proof}
\begin{remark}
It should be noted that a positive definite Finsler metric with vanishing mean Cartan tensor is necessarily Riemannian. However, there exist non-positive definite Finsler metrics with zero mean Cartan tensor that are not Riemannian; see \cite{deicke,YoussefElgendi2019} for details.
\end{remark}

  \section{The T-tensor}
 ~\par 
 
 The T-tensor plays a fundamental role in the study of Finsler geometry, as it reflects the deviation from Riemannian behavior and provides insight into the underlying curvature and torsion properties of the space.  
In this section, we derive an explicit expression for the T-tensor associated with a spherically symmetric Finsler metric. We provide two examples, namely, spherically symmetric Finsler metric of Kropina-type and Randers-type.
  
  \medskip
  
  For a Finsler manifold $(M,F)$, the $T$-tensor is defined by \cite{ttensor}
\begin{equation} \label{T-tensor}
T_{hijk}=FC_{hijk}-F(C_{rij}C^{r}_{hk}+C_{rjh}C^{r}_{ik}+C_{rih}C^{r}_{jk})
+C_{hij}\ell_k+C_{hik}\ell_j +C_{hjk}\ell_i+C_{ijk}\ell_h,
\end{equation}
where $\ell_j:=\dot{\partial}_j F$, $ C_{rijk}:=\dot{\partial}_rC_{ijk}$ and  $\dot{\partial}_j$ is the differentiation with respect to $y^j$.
The $T$-tensor is totally symmetric in all of its indices.

\begin{proposition}\label{Lem:T-tensor_SSFM}
The T-tensor of    a spherically symmetric Finsler metric   
$F=u\phi(r,s)$   is given by:
\begin{align*}
T_{hijk}=&    \Phi (\hbar_{hi}\hbar_{jk}+\hbar_{hj}\hbar_{ik}+\hbar_{hk}\hbar_{ij})\\
   &+\Psi(\hbar_{hk}m_im_j+\hbar_{hj}m_im_k+\hbar_{hi}m_jm_k+\hbar_{ij}m_hm_k+\hbar_{jk}m_im_h+\hbar_{ik}m_jm_h)\\
    &+\Omega\, m_hm_im_jm_k \\        
\end{align*}
where  
\begin{equation}
\label{Eq:PhiFactored}
\Phi = -\frac{\phi\, \sigma_2}{4u} \left(2 s +  \sigma_2 m^2 \kappa  \right),
\end{equation}
  \begin{equation}
\label{Eq:PsiFactored}
\Psi = \frac{\phi  }{4u} \left(  \frac{4\phi_s\, \sigma_2}{\phi}
-    2 s \mu_s 
-    2\rho_0 \sigma_2^2 - \sigma_2 \kappa (2 \sigma_2 +\mu_s m^2)   \right),
\end{equation}
    
\begin{equation}
\label{Eq:OmegaFactored}
\Omega =  \frac{\phi  }{4u} \left(   \frac{8\mu_s \phi_s}{\phi }+  
2 \mu_{ss}
- 6\rho_0 \sigma_2 \mu_s -3 (2\sigma_2 + \mu_s m^2)(\kappa \mu_s + 2\rho_3 \sigma_2)
\right),
\end{equation} 
and $\kappa:=\rho_0 + \rho_3m^2$.
\end{proposition}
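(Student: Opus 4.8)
The plan is to compute each term on the right-hand side of the definition \eqref{T-tensor} separately in terms of the two basic building blocks $\hbar_{ij}$ and $m_i$, and then collect coefficients. The key observation is that every tensor entering the formula — $C_{ijk}$, $C^r_{jk}$, $\ell_j$, and the new ingredient $C_{rijk}=\dot\partial_r C_{ijk}$ — lies in the algebra generated over the functions of $(r,s)$ by $\hbar_{ij}$ and $m_i$, so $T_{hijk}$ must be a symmetric combination of $\hbar\hbar$, $\hbar mm$, and $mmmm$ terms exactly as stated. The only unknowns are then the three scalar coefficients $\Phi$, $\Psi$, $\Omega$.

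First I would record the needed auxiliary derivatives. We have $\ell_j=\dot\partial_j F = \phi\,\dfrac{y_j}{u} + \phi_s\, m_j$ (this follows from $\dot\partial_j u = y_j/u$ and $\dot\partial_j s = m_j/u$), and since $T$ is to be written with $\hbar$ and $m$ only, the $y_j/u$ part of $\ell_j$ will recombine with the $\hbar$'s via $\hbar_{ij}y^j = 0$ — in fact I expect only the $\phi_s m_j$ piece of $\ell_j$ to survive in the contractions with $C_{hij}$, since $y^i C_{hij}=0$. Next, differentiating \eqref{Eq:Cartan_tensor} I would compute $C_{rijk}=\dot\partial_r C_{ijk}$ using $\dot\partial_r m_i = \dfrac{1}{u}(\hbar_{ri}s - m_rm_i \cdot 0 \dots)$ — more precisely $\dot\partial_r m_i = -\dfrac{s}{u}\hbar_{ri} - \dfrac{1}{u}m_r m_i$ (coming from $\dot\partial_r x_i = 0$, $\dot\partial_r(s/u) $ computed from $\dot\partial_r s$ and $\dot\partial_r u$), together with $\dot\partial_r \hbar_{ij} = -\dfrac{1}{u^2}(\hbar_{ri}y_j + \hbar_{rj}y_i)$ from \eqref{Identites_hbar}, and $\dot\partial_r (1/u) = -y_r/u^3$, $\dot\partial_r \sigma_2 = -s\mu_s\, m_r/u$ (note $\dot\partial_r$ acts on $s$), $\dot\partial_r\mu_s = \mu_{ss}m_r/u$. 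Assembling these gives $FC_{hijk}$ as a symmetric $\hbar/m$ expression; the terms involving $y_r$ drop after the contractions defining $T$ because everything is eventually contracted only through lower indices $h,i,j,k$ and $y^i\hbar_{ij}=0$, $y^i m_i = 0$.

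Then I would compute the ``square'' term $C_{rij}C^r_{hk}+C_{rjh}C^r_{ik}+C_{rih}C^r_{jk}$ by plugging in \eqref{Eq:Cartan_tensor} and \eqref{Eq:Cartan_1_2}. Here the contraction $C_{rij}C^r_{hk}$ produces, via $\hbar^r_{\ a}\hbar_{rb}=\hbar_{ab}$, $\hbar^r_{\ a}m_r = m_a$, $m^r m_r = m^2$, and the identities $\hbar_{ij}y^j=0$, $y^i m_i=0$, $x^i m_i = m^2$ from \eqref{Identites_m^2}, a combination of $\hbar_{ij}\hbar_{hk}$, $\hbar m m$, and $mmmm$ with coefficients built from $\rho_0,\rho_2,\rho_3,\sigma_2,\mu_s,m^2$ — the $y^r$ and $x^r$ pieces of $C^r_{jk}$ contribute through $y^r C_{rij}=0$ (killing the $y^r$ piece entirely) and $x^r C_{rij}$ (which by \eqref{Identites_m^2} equals a multiple of $m_i m_j$ plus $\hbar_{ij}$ times $m^2$), explaining the appearance of $\kappa=\rho_0+\rho_3 m^2$. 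Finally I would add the four $C\ell$ terms, using $C_{hij}\ell_k = C_{hij}(\phi\, y_k/u + \phi_s m_k)$ and noting $y^{(\cdot)}C_{\cdot\cdot\cdot}=0$ does \emph{not} kill these since $k$ is a free index — so the $\phi y_k/u$ part genuinely contributes, but only as a term proportional to $y_k$ times $\hbar$'s and $m$'s in the other three slots; these must cancel against the $y_r$-type leftovers from $FC_{hijk}$, since the final answer has no free $y$. Collecting the $\hbar\hbar$, $\hbar mm$, $mmmm$ coefficients and simplifying using the relations in \eqref{Identites_sigmas} (in particular $\sigma_3=-s\sigma_2$, $(\sigma_0)_s=\sigma_2$) and the definitions of $\rho_0,\rho_2,\rho_3$ then yields \eqref{Eq:PhiFactored}--\eqref{Eq:OmegaFactored}.

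The main obstacle is bookkeeping rather than conceptual: keeping track of the large number of $\hbar/m$ monomials generated (especially from $C_{rijk}$ and from the triple-product square term), correctly symmetrizing over the four indices $h,i,j,k$ with the right multiplicities (the $\hbar\hbar$ monomial has $3$ pairings, the $\hbar mm$ monomial has $6$, the $mmmm$ has $1$), and verifying that all $y$-dependent terms cancel so that the result really does collapse to the stated three-parameter family. A useful internal check at the end is total symmetry of $T_{hijk}$, which is automatic from \eqref{T-tensor} but provides a sanity test on the collected coefficients; a second check is to verify the trace/contraction $g^{hi}T_{hijk}$ against known formulas, or to specialize to the Riemannian case $\phi_s=0$ where $\sigma_2=0$, $\mu_s=0$ and confirm $T_{hijk}=0$.
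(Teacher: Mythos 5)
Your overall strategy is the same as the paper's: it proves three preparatory lemmas computing $\dot{\partial}_h C_{ijk}$, the cyclic sum of $C_{rij}C^r_{hk}$, and the cyclic sum of $C_{hij}\ell_k$, each written in terms of $\hbar_{ij}$, $m_i$ and the auxiliary tensor $n_{ij}=\frac{1}{u}(y_im_j+y_jm_i)$, and then adds them and reads off $\Phi$, $\Psi$, $\Omega$. Your observation that the $y$-bearing terms coming from $F\dot{\partial}_hC_{ijk}$ must cancel against those coming from the $\frac{\phi}{u}y_k$ part of $\ell_k$ in the $C\ell$ cyclic sum is exactly what happens there.

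However, one of the auxiliary derivatives you state is wrong, and the computation of $\dot{\partial}_hC_{ijk}$ --- and hence of $\Omega$ and of the cancellation you rely on --- fails if it is used as written. From $m_i=x_i-\frac{s}{u}y_i$, $\dot{\partial}_r s=\frac{m_r}{u}$ and $\dot{\partial}_r\frac{1}{u}=-\frac{y_r}{u^3}$ one gets
\[
\dot{\partial}_r m_i=-\frac{s}{u}\,\hbar_{ri}-\frac{1}{u^2}\,y_i\,m_r ,
\]
not $-\frac{s}{u}\hbar_{ri}-\frac{1}{u}m_rm_i$ as you wrote. The difference is not cosmetic: when you differentiate the term $\frac{\mu_s}{2u}m_im_jm_k$ of the Cartan tensor, the correct formula produces the $y$-bearing combinations $n_{ij}m_hm_k$, etc., which are precisely what cancels the corresponding $n$-terms generated by $C_{hij}\ell_k+\cdots$; with your version these are replaced by a spurious pure $m_hm_im_jm_k$ contribution, so the $y$-terms would not cancel and the coefficient $\Omega$ would come out wrong. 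Once this single formula is corrected, the rest of your plan (including the treatment of the $y^r$ and $x^r$ pieces of $C^r_{jk}$ via $y^rC_{rij}=0$, $x^rm_r=m^2$, $x^r\hbar_{ri}=m_i$, which is indeed how $\kappa=\rho_0+\rho_3m^2$ arises) goes through and reproduces the paper's argument. Your initial remark that only the $\phi_s m_j$ part of $\ell_j$ survives is also inaccurate, since $k$ is a free index in $C_{hij}\ell_k$, but you correct this yourself later in the proposal.
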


To simplify and shorten the proof of the above proposition, we present the following lemmas.

  \begin{lemma}
For a spherically symmetric Finsler metric  
$F=u\phi(r,s)$,    the vertical derivative of the Cartan tensor \( C_{ijk} \) can be expressed as
\begin{eqnarray}\label{lamda}
  \dot{\partial}_h{C}_{ijk}  &=&
\nonumber     -\frac{\sigma_2}{2u^2}(\hbar_{ik}n_{jh}+\hbar_{jk}n_{ih}+\hbar_{ij}n_{kh}
   +\hbar_{jh}n_{ik}+\hbar_{kh}n_{ij}+\hbar_{ih}n_{jk}) \\
\nonumber    &&-\frac{s\sigma_2}{2u^2}(\hbar_{ik}\hbar_{jh}+\hbar_{jk}\hbar_{ih}+\hbar_{kh}\hbar_{ij})
   -\frac{s\mu_s}{2u^2}(\hbar_{ij}m_hm_k+\hbar_{ki}m_jm_h+\hbar_{hk}m_im_j\\
\nonumber    &&+\hbar_{jh}m_km_i+\hbar_{ih}m_jm_k+\hbar_{kj}m_im_h)+\frac{\mu_{ss}}{2u^2}m_im_jm_hm_k\\
   &&-\frac{\mu_s}{2u^2}(n_{ij}m_hm_k+n_{kh}m_im_j)
\end{eqnarray}
where  $n_{ij}:=\frac{1}{u}(y_im_j+y_jm_i)$.  
  \end{lemma}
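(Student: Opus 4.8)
The plan is to differentiate the closed form for the Cartan tensor obtained in Lemma \ref{C_ijk}, namely \eqref{Eq:Cartan_tensor}, directly with respect to $y^h$ and then reorganize the outcome into the tensor basis built from $\hbar_{ij}$, $m_i$, and $n_{ij}$. Abbreviating $C_{ijk}=\frac{\sigma_2}{2u}(\hbar_{ij}m_k+\hbar_{jk}m_i+\hbar_{ik}m_j)+\frac{\mu_s}{2u}\,m_im_jm_k$, the whole computation is a systematic application of the product rule, once one first assembles the elementary vertical derivatives of each factor.

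First I would record the differentiation toolkit. From $u=|y|$ and $s=\langle x,y\rangle/u$ one gets $\dot{\partial}_h u = y_h/u$ and $\dot{\partial}_h s = m_h/u$, the latter already used in Lemma \ref{C_ijk}. Differentiating $m_i = x_i - (s/u)y_i$ and invoking \eqref{Identites_hbar} yields $\dot{\partial}_h m_i = -\tfrac{s}{u}\hbar_{ih} - \tfrac{1}{u^2}y_i m_h$, while differentiating $\hbar_{ij}=\delta_{ij}-y_iy_j/u^2$ through \eqref{Identites_hbar} gives $\dot{\partial}_h \hbar_{ij} = -\tfrac{1}{u^2}(y_i\hbar_{jh}+y_j\hbar_{ih})$. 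The scalar coefficients are handled by the chain rule together with the identities \eqref{Identites_sigmas}: $\dot{\partial}_h\sigma_2 = (\sigma_2)_s\, m_h/u = -s\mu_s\, m_h/u$ and $\dot{\partial}_h\mu_s = \mu_{ss}\, m_h/u$, together with $\dot{\partial}_h(1/u) = -y_h/u^3$.

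With these in hand I would differentiate $C_{ijk}$ term by term, splitting the pieces according to their scalar prefactor. The terms in which the derivative lands on an $\hbar$ or on an $m$ while keeping the prefactor produce, respectively, the $-\tfrac{s\sigma_2}{2u^2}\hbar\hbar$, the $-\tfrac{s\mu_s}{2u^2}\hbar\,mm$, and the $+\tfrac{\mu_{ss}}{2u^2}mmmm$ contributions, which match the target almost verbatim after cyclic relabeling of $i,j,k,h$. The remaining terms — those where the derivative hits a scalar coefficient (producing a $y_h$), together with the $\dot{\partial}_h\hbar$ contributions and the $-\tfrac{1}{u^2}y_i m_h$ part of $\dot{\partial}_h m$ (producing a $y$ on one of $i,j,k$) — are precisely the monomials carrying a single factor of $y$.

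The main obstacle is the reorganization of these single-$y$ monomials into the $n$-tensor form; everything else is bookkeeping. The key is the algebraic identity $y_p m_q + y_q m_p = u\,n_{pq}$, immediate from the definition of $n_{pq}$. I would collect the twelve $y$-bearing monomials carrying a $\sigma_2$ prefactor, grouping them by which index pair survives on the $\hbar$; each group contains exactly the two monomials needed to assemble one $u\,\hbar\,n$ term, collapsing the block into $-\tfrac{\sigma_2}{2u^2}(\hbar_{ik}n_{jh}+\hbar_{jk}n_{ih}+\hbar_{ij}n_{kh}+\hbar_{jh}n_{ik}+\hbar_{kh}n_{ij}+\hbar_{ih}n_{jk})$. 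Likewise the four $y$-bearing monomials with a $\mu_s$ prefactor pair into $-\tfrac{\mu_s}{2u^2}(n_{ij}m_hm_k + n_{kh}m_im_j)$. I would remark that this last grouping is not unique: since $\dot{\partial}_h C_{ijk}=\tfrac14\dot{\partial}_h\dot{\partial}_i\dot{\partial}_j\dot{\partial}_k F^2$ is totally symmetric, the symmetric sum $\sum y_\bullet\, mmm$ can equally be written as $n_{ik}m_jm_h+n_{jh}m_im_k$, so the stated expression is just one convenient representative. Confirming that each monomial is accounted for exactly once with the correct coefficient is the only genuinely delicate step, and once the pairing identity is applied the stated formula \eqref{lamda} follows.
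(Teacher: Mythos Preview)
Your proposal is correct and follows essentially the same approach as the paper: differentiate the expression \eqref{Eq:Cartan_tensor} for $C_{ijk}$ using the elementary identities for $\dot{\partial}_h u$, $\dot{\partial}_h s$, $\dot{\partial}_h m_i$, and $\dot{\partial}_h \hbar_{ij}$, then regroup. The paper merely lists these identities and asserts the result, whereas you spell out the reorganization into the $n_{pq}$ form (your expressions for $\dot{\partial}_h m_i$ and $\dot{\partial}_h\hbar_{ij}$ in terms of $\hbar$ rather than $\delta$ are equivalent to the paper's and make that regrouping slightly cleaner), and your remark on the non-uniqueness of the $n_{ij}m_hm_k+n_{kh}m_im_j$ grouping via total symmetry of $\dot{\partial}_h C_{ijk}$ is a valid observation not noted in the paper.
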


 \begin{proof}
The result follows by using Lemma \ref{C_ijk} and making use of the fact that $\dot{\partial}_is=\frac{m_i}{u}$ together with  applying the following identities:
\[
\frac{\partial u}{\partial y^k} = \frac{1}{u} y_k, \quad
\frac{\partial m_k}{\partial y^j} = -\frac{1}{u^2} y_j m_k + \frac{s}{u^3} y_j y_k - \frac{s}{u} \delta_{jk}, \quad
\frac{\partial \hbar_{ij}}{\partial y^k} = \frac{2}{u^4} y_i y_j y_k - \frac{1}{u^2} y_i \delta_{jk} - \frac{1}{u^2} y_j \delta_{ik}.
\]
\end{proof}

 \begin{lemma}
 For a spherically symmetric Finsler metric  
$F=u\phi(r,s)$,  the following identity holds for the contraction of Cartan tensors:
  \begin{align*}
  {C}_{ijr} {C}^r_{hk}&+{C}_{jkr} {C}^r_{hi}+{C}_{ikr} {C}^r_{hj}\\
=&\frac{3(2\rho_0\sigma_2 \mu_s+(2\sigma_2+\mu_s  m^2)(\rho_0 \mu_s+2\rho_3\sigma_2+\rho_3 \mu_s m^2))}{4u^2} m_im_jm_km_h\\
&+\frac{\sigma_2^2m^2(\rho_0+\rho_3m^2)}{4u^2} (\hbar_{hk}\hbar_{ij}+\hbar_{hj}\hbar_{ik}+\hbar_{hi}\hbar_{jk})\\
&+\frac{2\rho_0 \sigma_2^2+(2\sigma_2+\mu_s  m^2)(\rho_0\sigma_2+\rho_3\sigma_2m^2)}{4u^2}(\hbar_{ij}m_hm_k+\hbar_{hj}m_im_k\\
&+\hbar_{hi}m_jm_k+\hbar_{hk}m_im_j+\hbar_{jk}m_im_h+\hbar_{ik}m_jm_h)
   \end{align*}
   \end{lemma}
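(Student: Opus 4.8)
The plan is to reduce the triple contraction to a short list of elementary contractions of $C_{ijk}$ against the vectors $m^r$, $y^r$, $x^r$ and the projector $\hbar^r_h$, each of which follows immediately from the indicatory and projection identities already recorded in \eqref{Identites_m^2} and the Remark after Lemma \ref{C_ijk}. Working directly from \eqref{Eq:Cartan_tensor}, I would first establish the four reductions
\[
C_{ijr}\,y^r=0,\qquad C_{ijr}\,\hbar^r_h=C_{ijh},\qquad C_{ijr}\,m^r=C_{ijr}\,x^r=:P_{ij},
\]
where
\[
P_{ij}=\frac{\sigma_2}{2u}\bigl(\hbar_{ij}\,m^2+2\,m_im_j\bigr)+\frac{\mu_s\,m^2}{2u}\,m_im_j .
\]
The first is the fact $y^rC_{rjk}=0$; the second uses that $\hbar$ is the projection orthogonal to $y$ and that $C$ is indicatory, so $\hbar^r_h$ acts as the identity on the free $r$-slot; the equality $C_{ijr}m^r=C_{ijr}x^r$ follows because $m^r=x^r-\tfrac{s}{u}y^r$ and the $y^r$-term is annihilated. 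The contractions $\hbar_{ir}m^r=m_i$, $m_rm^r=m^2$, $m_rx^r=m^2$ drive the computation of $P_{ij}$.

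Next I would write the $(1,2)$-form \eqref{Eq:Cartan_1_2} in the schematic shape
\[
C^r_{hk}=A\bigl(\hbar^r_h\,m_k+\hbar^r_k\,m_h\bigr)+B_{hk}\,m^r+D_{hk}\,y^r+E_{hk}\,x^r,
\]
with $A=\tfrac{\rho_0\sigma_2}{2u}$, $B_{hk}=\tfrac{\rho_0\sigma_2}{2u}\hbar_{hk}+\tfrac{\rho_0\mu_s}{2u}m_hm_k$, $E_{hk}=\tfrac{\rho_3\sigma_2 m^2}{2u}\hbar_{hk}+\tfrac{2\rho_3\sigma_2+\rho_3\mu_s m^2}{2u}m_hm_k$, and $D_{hk}$ the $y^r$-coefficient containing $\rho_2$. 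Contracting against $C_{ijr}$ and using the four reductions, the single contraction collapses to the master formula
\[
C_{ijr}\,C^r_{hk}=A\bigl(m_k\,C_{ijh}+m_h\,C_{ijk}\bigr)+\bigl(B_{hk}+E_{hk}\bigr)\,P_{ij},
\]
because the $D_{hk}\,y^r$ term dies against $C_{ijr}y^r=0$. In particular the inverse-metric coefficients $\rho_1,\rho_2$ never enter, which already explains structurally why only $\rho_0$ and $\rho_3$ survive in the claimed answer. The stated identity is then the sum of this master formula over the three pairings $(ij,hk)$, $(jk,hi)$, $(ik,hj)$; I would substitute $P_{ij}$ and $C_{ijh}$ back in terms of $\hbar$ and $m$ and collect into the three invariant tensor structures $\hbar\hbar$, $\hbar mm$, and $mmmm$. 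As a consistency check, the $\hbar\hbar$ structure can only arise from the products of $B_{hk}$ and $E_{hk}$ with the $\hbar_{ij}$-part of $P_{ij}$ (the $A$-terms produce only $\hbar mm$ and $mmmm$ terms), yielding coefficient $\tfrac{\sigma_2^2 m^2(\rho_0+\rho_3 m^2)}{4u^2}$ per pairing, which matches the $\hbar\hbar$ coefficient in the statement after the three pairings supply $\hbar_{hk}\hbar_{ij}+\hbar_{hi}\hbar_{jk}+\hbar_{hj}\hbar_{ik}$.

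The main obstacle is purely the bookkeeping for the six-term $\hbar mm$ structure and the $mmmm$ term. For these, the pieces $A\,m_k C_{ijh}$ and $A\,m_h C_{ijk}$, the cross terms inside $(B_{hk}+E_{hk})P_{ij}$, and all three pairings contribute simultaneously, and one must verify that the resulting products of $\sigma_2,\mu_s,\rho_0,\rho_3,m^2$ assemble into exactly
\[
\frac{2\rho_0\sigma_2^2+(2\sigma_2+\mu_s m^2)(\rho_0\sigma_2+\rho_3\sigma_2 m^2)}{4u^2}
\quad\text{and}\quad
\frac{3\bigl(2\rho_0\sigma_2\mu_s+(2\sigma_2+\mu_s m^2)(\rho_0\mu_s+2\rho_3\sigma_2+\rho_3\mu_s m^2)\bigr)}{4u^2}
\]
for the $\hbar mm$ and $mmmm$ coefficients respectively. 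No new idea beyond careful collection is required: the factor $2\sigma_2+\mu_s m^2$ is precisely the combination $\tfrac{2u}{m^2}$ times the $m_im_j$-coefficient of $P_{ij}$, so tracking that single grouping throughout makes the three pairings assemble transparently into the claimed symmetric form.
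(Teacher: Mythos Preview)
Your proposal is correct and follows essentially the same approach as the paper: both compute the single contraction $C_{ijr}\,C^r_{hk}$ by pairing the explicit formulas \eqref{Eq:Cartan_tensor} and \eqref{Eq:Cartan_1_2}, use the indicatory identities \eqref{Identites_m^2} and $y^rC_{rjk}=0$ to kill the $y^r$-pieces, and then apply the cyclic sum over $(i,j,k)$. Your introduction of the intermediate quantity $P_{ij}=C_{ijr}m^r=C_{ijr}x^r$ and the schematic decomposition of $C^r_{hk}$ is a cleaner packaging of exactly the same direct multiplication the paper carries out line by line, and your structural remark that $\rho_1,\rho_2$ cannot appear is implicit in the paper's calculation as well.
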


 \begin{proof}
 Using the definitions of the Cartan tensor \eqref{Eq:Cartan_tensor} and \eqref{Eq:Cartan_1_2} together with the identities \eqref{Identites_hbar} and \eqref{Identites_m^2}, we have
   \begin{align*}
C^r_{hk} C_{ijr}&= \Big{[}     \frac{\rho_0  \sigma_2}{2u} \left( \hbar^r_{ h} m_k  + \hbar^r_{k} m_h \right) +\left( \frac{\rho_0  \sigma_2}{2u}  \hbar_{hk}   + \frac{\rho_0 \mu_s}{2u}  m_h m_k\right)  m^r\\
&+ \left(   \frac{\rho_2\sigma_2 m^2}{2u^2}  \hbar_{hk}     +  \frac{2\rho_2\sigma_2+\rho_2 \mu_s m^2}{2u^2}  m_h m_k \right)y^r  \\ 
&+   \left(   \frac{\rho_3\sigma_2 m^2}{2u}  \hbar_{hk}     +  \frac{2\rho_3\sigma_2+\rho_3 \mu_s m^2}{2u}  m_h m_k \right)x^r  \Big{]} \\
&\times  \Big{(}   \frac{\sigma_2}{2u} \left( \hbar_{ij} m_r + \hbar_{jr} m_i + \hbar_{ir} m_j \right) + \frac{\mu_s}{2u} m_i m_j m_r   \Big{)} \\
&=   \frac{\rho_0 \sigma_2^2 }{4u^2} (2\hbar_{ij}m_hm_k+\hbar_{hj}m_im_k+\hbar_{hi}m_jm_k+\hbar_{jk}m_im_h+\hbar_{ik}m_jm_h)\\
&+   \frac{2\rho_0 \sigma_2 \mu_s }{4u^2} m_im_jm_hm_k  \\
& +\left( \frac{\rho_0  \sigma_2}{2u}  \hbar_{hk}   + \frac{\rho_0 \mu_s}{2u}  m_h m_k\right) \left(   \frac{\sigma_2}{2u} \left(m^2 \hbar_{ij}   \right) + \frac{2\sigma_2+\mu_s m^2 }{2u} m_i m_j    \right) \\
&+\left(   \frac{\rho_3\sigma_2 m^2}{2u}  \hbar_{hk}     +  \frac{2\rho_3\sigma_2+\rho_3 \mu_s m^2}{2u}  m_h m_k \right)   \Big{(}   \frac{\sigma_2}{2u} \left( m^2  \hbar_{ij}     \right) \\
&+ \frac{2\sigma_2+\mu_s m^2}{2u} m_i m_j   \Big{)} \\
&=  \frac{\rho_0 \sigma_2^2 }{4u^2} (\hbar_{hj}m_im_k+\hbar_{hi}m_jm_k+\hbar_{jk}m_im_h+\hbar_{ik}m_jm_h)\\
&+\frac{2\rho_0\sigma_2 \mu_s+(2\sigma_2+\mu_s  m^2)(\rho_0 \mu_s+2\rho_3\sigma_2+\rho_3 \mu_s m^2)}{4u^2} m_im_jm_km_h\\
&+\frac{\sigma_2^2 m^2(\rho_0+\rho_3 m^2)}{4u^2} \hbar_{hk}\hbar_{ij}\\
&+\frac{(2\sigma_2+\mu_s  m^2)(\rho_0\sigma_2+\rho_3\sigma_2 m^2)}{4u^2}\hbar_{hk}m_im_j\\
&+\frac{(2\sigma_2+\mu_s  m^2)(\rho_0\sigma_2+\rho_3\sigma_2 m^2)}{4u^2}\hbar_{ij}m_hm_k.
   \end{align*}
Now, by applying the cyclic sum over the indices \( i, j, k \) to the expression \( C^r_{hk} C_{ijr} \), we obtain the desired formula.
 \end{proof}

  \begin{lemma}
Let  $F=u\phi(r,s)$ be  a spherically symmetric Finsler metric,  the following identity holds for the Cartan tensor \( C_{ijk} \) and the normalized supporting element \( \ell_i \):
\begin{align*}
C_{hij} \ell_k + C_{hik} \ell_j + C_{hjk} \ell_i + C_{ijk} \ell_h 
=&\ \frac{\mu_s \phi}{2u} \left( m_i m_j n_{kh} + m_k m_h n_{ij} \right)
+ \frac{\mu_s \phi_s}{2u} m_i m_j m_h m_k \\
&+ \frac{\phi_s \sigma_2}{u} \big(
h_{ij} m_h m_k + h_{ki} m_j m_h + h_{hk} m_i m_j \\
&\quad + h_{jh} m_k m_i + h_{ih} m_j m_k + h_{kj} m_i m_h \big) \\
&+ \frac{\phi \sigma_2}{2u} \big(
h_{ik} n_{jh} + h_{jk} n_{ih} + h_{ij} n_{kh} \\
&\quad + h_{jh} n_{ik} + h_{kh} n_{ij} + h_{ih} n_{jk} \big).
\end{align*}
\end{lemma}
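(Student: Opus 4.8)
The plan is to compute the right-hand side of the stated identity directly from the explicit formula \eqref{Eq:Cartan_tensor} for $C_{ijk}$ together with the well-known expression $\ell_i = \dot\partial_i F = \frac{\partial}{\partial y^i}\bigl(u\phi(r,s)\bigr)$. First I would evaluate $\ell_i$: since $\dot\partial_i u = y_i/u$ and $\dot\partial_i s = m_i/u$, one gets $\ell_i = \frac{y_i}{u}\phi + u\phi_s\cdot\frac{m_i}{u} = \frac{\phi}{u}y_i + \phi_s m_i$. This is the only genuinely new ingredient; everything else is bookkeeping.

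Next I would substitute $C_{hij} = \frac{\sigma_2}{2u}(\hbar_{hi}m_j + \hbar_{ij}m_h + \hbar_{hj}m_i) + \frac{\mu_s}{2u}m_hm_im_j$ and the analogous cyclic variants into the four-term sum $C_{hij}\ell_k + C_{hik}\ell_j + C_{hjk}\ell_i + C_{ijk}\ell_h$, and then expand each product against $\ell_\bullet = \frac{\phi}{u}y_\bullet + \phi_s m_\bullet$. The $\phi_s m_\bullet$ part of $\ell$ multiplies the $\sigma_2$-terms of $C$ into quantities of the form $\hbar_{\cdot\cdot}m_\cdot m_\cdot$ and the $\mu_s$-term into $m_hm_im_jm_k$; here I would use $m^i m_i = m^2$ only if a contraction arises — but in fact no contraction occurs in this lemma, so it is pure term-collection. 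The $\frac{\phi}{u}y_\bullet$ part of $\ell$ multiplies $C$'s terms and I would repeatedly use the identity $y^i m_i = 0$ from \eqref{Identites_m^2} and the definition $n_{ij} = \frac{1}{u}(y_im_j + y_jm_i)$ to recognize combinations like $\frac{1}{u}(\hbar_{ij}m_h y_k + \cdots)$ as $\hbar$-times-$n$ or $m$-times-$n$ terms. The factor $\mu_s\phi/(2u)$ multiplying $y_\bullet m_\bullet m_\bullet$-type pieces assembles into $\frac{\mu_s\phi}{2u}(m_im_j n_{kh} + m_km_h n_{ij})$ after pairing the $k\leftrightarrow h$ symmetric combinations, and the $\sigma_2\phi/(u)$ and $\sigma_2\phi/(2u)$ coefficients fall out of counting how many of the four $C$-terms contribute each $\hbar m m$ or $\hbar n$ monomial.

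The main obstacle — really the only one — is purely combinatorial care: there are four terms $C_{\cdots}\ell_\cdot$, each $C$ expands into four monomials, and each is multiplied by two pieces of $\ell$, so one is collecting $\sim 32$ contributions and must correctly group them by the symmetry type of the resulting tensor monomial (fully-$m$, $\hbar m m$, $\hbar n$, $m m n$) while keeping track of which index sits on the $y$ versus on an $m$. I would organize this by first treating the $\phi_s m_\bullet$-half of $\ell$ (which contributes only to the $\frac{\mu_s\phi_s}{2u}m_hm_im_jm_k$ term and the $\frac{\phi_s\sigma_2}{u}(\hbar m m + \cdots)$ sextet), then separately the $\frac{\phi}{u}y_\bullet$-half (which contributes only to the $\frac{\mu_s\phi}{2u}(mmn + mmn)$ pair and the $\frac{\phi\sigma_2}{2u}(\hbar n + \cdots)$ sextet), noting that $y_\bullet$ contracted with an $\hbar$ or $m$ carrying that same index either vanishes or produces a $y$-factor that combines into an $n$. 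Checking that the $\hbar m m$-type coefficient is exactly $\phi_s\sigma_2/u$ (not $/2u$) amounts to observing that in the $\phi_s$-half, each of the three $\sigma_2$-monomials of a given $C_{\cdot\cdot\cdot}$ contributes once, and summing over the four $C$'s with the correct index-matching yields coefficient $2\cdot\frac{\sigma_2}{2u}\cdot\phi_s = \frac{\phi_s\sigma_2}{u}$ for each of the six sextet monomials. Once the grouping is done, matching to the stated right-hand side is immediate, so I would simply display the expansion in a single aligned computation and collect terms.
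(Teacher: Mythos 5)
Your approach is exactly the paper's: compute $\ell_i=\frac{\phi}{u}y_i+\phi_s m_i$, expand one product $C_{ijk}\ell_h$ against the two pieces of $\ell$, and obtain the full left-hand side by the cyclic sum over $h,i,j,k$; the term-collection you describe (grouping by monomial type $\hbar\hbar$-free, $\hbar mm$, $\hbar n$, $mmn$, $m^4$, and recognizing $y_am_b+y_bm_a=u\,n_{ab}$) is precisely what the paper does. Your count for the $\hbar mm$ sextet (each of the six monomials occurs in exactly two of the four products, giving $2\cdot\frac{\sigma_2}{2u}\cdot\phi_s=\frac{\phi_s\sigma_2}{u}$) and for the $\hbar n$ and $mmn$ groups is correct.

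There is, however, one place where your plan would run into trouble: the fully symmetric monomial $m_hm_im_jm_k$. Each of the \emph{four} products $C_{\cdot\cdot\cdot}\ell_\cdot$ contributes $\frac{\mu_s}{2u}m_am_bm_c\cdot\phi_s m_d=\frac{\mu_s\phi_s}{2u}\,m_hm_im_jm_k$, so the total coefficient is $4\cdot\frac{\mu_s\phi_s}{2u}=\frac{2\mu_s\phi_s}{u}$, not the $\frac{\mu_s\phi_s}{2u}$ appearing in the lemma as stated (which you echo without doing the count, even though you carried out the analogous double-counting carefully for the $\hbar mm$ sextet). The statement of the lemma therefore contains a typo; the paper itself uses the correct value $\frac{2\mu_s\phi_s}{u}\,m_im_jm_hm_k$ when it assembles the $T$-tensor in the proof of the main proposition. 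If you execute your expansion honestly you will obtain $\frac{2\mu_s\phi_s}{u}$ and be unable to match the printed right-hand side, so you should either correct the target identity or explicitly note the factor-of-four discrepancy; otherwise the proof is complete and agrees with the paper's.
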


   \begin{proof}
  Since $ \ell_i:=\dot{\partial}_iF=\dot{\partial}_i(u\phi)= \frac{\phi}{u}y_i+\phi_s m_i$, we get
   \begin{align*}
   C_{ijk} \ell_h =&\frac{\phi \sigma_2}{2u^2}(\hbar_{ij}m_ky_h+\hbar_{jk}m_iy_h+\hbar_{ik}m_jy_h)\\
   &+\frac{\phi_s \sigma_2}{2u}(\hbar_{ij}m_km_h+\hbar_{jk}m_im_h+\hbar_{ik}m_jm_h)\\
   &+\frac{\mu_s\phi }{2u^2} m_im_jm_ky_h+\frac{ \mu_s\phi_s}{2u} m_im_jm_km_h.
   \end{align*}
   Now, by applying the cyclic sum over the indices \( i, j, k ,h\) to the expression \(C_{ijk} \ell_h \), we obtain the desired formula.
\end{proof}
   
  Now, we are in a position to prove  Proposition \ref{Lem:T-tensor_SSFM}. 

\begin{proof}[Proof of Proposition \ref{Lem:T-tensor_SSFM}]
By using the preparatory lemmas and formulas established above, we proceed as follows:
\begin{align*}
T_{hijk}=&FC_{hijk}-F(C_{rij}C^{r}_{hk}+C_{rjh}C^{r}_{ik}+C_{rih}C^{r}_{jk})
+C_{hij}\ell_k+C_{hik}\ell_j +C_{hjk}\ell_i+C_{ijk}\ell_h\\
=&  -\frac{\sigma_2 \phi}{2u}(\hbar_{ik}n_{jh}+\hbar_{jk}n_{ih}+\hbar_{ij}n_{kh}
   +\hbar_{jh}n_{ik}+\hbar_{kh}n_{ij}+\hbar_{ih}n_{jk}) \\
       &-\frac{s\sigma_2 \phi  }{2u}(\hbar_{ik}\hbar_{jh}+\hbar_{jk}\hbar_{ih}+\hbar_{kh}\hbar_{ij})
   -\frac{s\mu_s \phi}{2u}(\hbar_{ij}m_hm_k+\hbar_{ki}m_jm_h+\hbar_{hk}m_im_j\\
      &+\hbar_{jh}m_km_i+\hbar_{ih}m_jm_k+\hbar_{kj}m_im_h)+\frac{\phi\mu_{ss}}{2u}m_im_jm_hm_k\\
    &-\frac{\mu_s \phi}{2u}(n_{ij}m_hm_k+n_{kh}m_im_j)\\
&-\frac{3 \phi (2\rho_0 \sigma_2 \mu_s+(2\sigma_2+\mu_s  m^2)(\rho_0 \mu_s+2\rho_3\sigma_2+\rho_3 \mu_s m^2))}{4u} m_im_jm_km_h\\
&-\frac{\phi \sigma_2^2m^2(\rho_0+\rho_3(r^2-s^2))}{4u} (\hbar_{hk}\hbar_{ij}+\hbar_{hj}\hbar_{ik}+\hbar_{hi}\hbar_{jk})\\
&-\frac{\phi(2\rho_0 \sigma_2^2+(2\sigma_2+\mu_s  m^2)(\rho_0\sigma_2+\rho_3\sigma_2m^2))}{4u}(\hbar_{ij}m_hm_k+\hbar_{hj}m_im_k\\
&+\hbar_{hi}m_jm_k+\hbar_{hk}m_im_j+\hbar_{jk}m_im_h+\hbar_{ik}m_jm_h)\\
& +  \frac{\mu_s\phi }{2u}(m_im_jn_{kh}+m_km_hn_{ij})
 +\frac{2 \mu_s\phi_s}{u} m_im_jm_hm_k\\
    & +\frac{\phi_s \sigma_2}{u}(h_{ij}m_hm_k+h_{ki}m_jm_h+h_{hk}m_im_j+h_{jh}m_km_i+h_{ih}m_jm_k+h_{kj}m_im_h)\\
     & 
   +\frac{\phi \sigma_2}{2u}   (h_{ik}n_{jh}+h_{jk}n_{ih}+h_{ij}n_{kh}
   +h_{jh}n_{ik}+h_{kh}n_{ij}+h_{ih}n_{jk})\\
=&    \Phi (h_{hi}h_{jk}+h_{hj}h_{ik}+h_{hk}h_{ij})\\
   &+\Psi(h_{hk}m_im_j+h_{hj}m_im_k+h_{hi}m_jm_k+h_{ij}m_hm_k+h_{jk}m_im_h+h_{ik}m_jm_h)\\
    &+\Omega\, m_hm_im_jm_k \\      
\end{align*}

where we set 

\begin{equation}
\label{Eq:Phi}
\Phi:= -\frac{s\sigma_2 \phi  }{2u}  -\frac{\phi \sigma_2^2m^2(\rho_0+\rho_3m^2)}{4u} 
\end{equation}
  
  \begin{equation}
\label{Eq:Psi}
\Psi:= 
-\frac{s\mu_s \phi}{2u}-\frac{\phi(2\rho_0 \sigma_2^2+(2\sigma_2+\mu_s  m^2)(\rho_0\sigma_2+\rho_3\sigma_2m^2))}{4u}+\frac{\phi_s \sigma_2}{u}
\end{equation}
  
  \begin{equation}
\label{Eq:Omega}
\Omega:= 
 \frac{\mu_{ss}\phi}{2u}-\frac{3 \phi (2\rho_0 \sigma_2 \mu_s+(2\sigma_2+\mu_s  m^2)(\rho_0 \mu_s+2\rho_3\sigma_2+\rho_3 \mu_s m^2))}{4u}+\frac{2 \mu_s\phi_s}{u}
\end{equation}

Simplifying the above formuale of $\Phi$, $\Psi$ and $\Omega$ we get the formulae given in \eqref{Eq:PhiFactored}, \eqref{Eq:PsiFactored}, and \eqref{Eq:OmegaFactored}.
\end{proof}

For a spherically symmetric Finsler metric of the form \(F = u\, \phi(r,s)\), one can explicitly compute the tensors \(\Phi\), \(\Psi\), and \(\Omega\) that appear in the expression of the \(T\)-tensor. These calculations can be performed either by direct symbolic computation or efficiently using computer algebra systems such as Maple. For illustration, we present the following two examples providing explicit formulas for the \(T\)-tensor. The Maple worksheet  and corresponding PDF files for the calculations can be found at:\\
\url{https://github.com/salahelgendi/Calculations_T-tensor_SSFM.git}

\begin{example}
Let $F$ be a spherically symmetric Finsler metric of  Kropina-type, that is, 
\[
F = \frac{u}{s}, \quad \text{with } \phi(r,s) = \frac{1}{s}.
\]
The \(T\)-tensor of $F$ is given by
\begin{align*}
T_{hijk} &= \frac{2}{s u  r^{2} } \bigl(\hbar_{hi} \hbar_{jk} + \hbar_{hj} \hbar_{ik} + \hbar_{hk} \hbar\hbar_{ij}\bigr) \\
&\quad + \frac{2}{u r^{2} s^{3}} \bigl(\hbar_{hi} m_j m_k + \hbar_{hj} m_i m_k + \hbar_{ij} m_h m_k + \hbar_{jk} m_i m_h + \hbar_{hk} m_i m_j + \hbar_{ik} m_j m_h\bigr) \\
&\quad + \frac{6}{u r^{2} s^{5}} m_h m_i m_j m_k.
\end{align*}
\end{example}

\begin{example}
Similarly, let  $F$ be a spherically symmetric Finsler metric of  Randers-type, that is, 

\[
F = u (1 + s), \quad \text{with } \phi(r,s) = 1 + s.
\]
T he \(T\)-tensor of $F$ is  given by
\[
T_{hijk} = - \frac{r^{2} + s^{2} + 2 s}{4 u} \bigl(\hbar_{hi} \hbar_{jk} + \hbar_{hj} \hbar_{ik} + \hbar_{hk} \hbar\hbar_{ij}\bigr).
\]
\end{example}

It is worth noting that the \(T\)-tensor of the Kropina metric has also been obtained by Shibata \cite{r2.12,beta2}, and the \(T\)-tensor of the Randers metric was studied by Matsumoto \cite{r2.8}.

\section{T-condition}
In this section, we focus on the \(T\)-condition and provide a complete characterization of all spherically symmetric Finsler metrics that satisfy this condition. Our goal is to analyze the structural constraints imposed by the \(T\)-condition on metrics of the form \(F = u\, \phi(r,s)\), thereby identifying precisely which functions \(\phi\) yield metrics with vanishing or specially structured \(T\)-tensors. This study contributes to a deeper understanding of the geometric properties and special classes within the family of spherically symmetric Finsler metrics.

Before proceeding, we present the following proposition that will play a key role in our subsequent analysis.
 \begin{proposition}\label{Prop:sigma_2}
A spherically symmetric Finsler metric $F = u\,\phi(r,s)$ is Riemannian if and only if $\sigma_2 = 0$.
\end{proposition}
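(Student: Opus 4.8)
The plan is to prove the equivalence in two directions, both resting on the structure of the metric tensor \eqref{Eq:g_ij} and the $\sigma$-identities \eqref{Identites_sigmas}. The forward direction is immediate: if $F$ is Riemannian, then its Cartan tensor $C_{ijk}$ vanishes identically; contracting the Cartan-tensor formula \eqref{Eq:Cartan_tensor} with $y^j$ (and using $y^j\hbar_{ij}=0$, $y^jm_j=0$ from \eqref{Identites_m^2}) already kills everything, so to extract $\sigma_2$ I would instead contract \eqref{Eq:Cartan_tensor} cleverly or simply read off: since $\hbar_{ij}$, $m_i$ are nonzero and the tensors $\hbar_{ij}m_k+\hbar_{jk}m_i+\hbar_{ik}m_j$ and $m_im_jm_k$ are linearly independent (for $n\geq 2$), $C_{ijk}=0$ forces both $\sigma_2=0$ and $\mu_s=0$. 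In particular $\sigma_2=0$.

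For the converse, suppose $\sigma_2=0$. The first step is to show this single scalar equation forces $\mu_s=\sigma_1{}_s=0$ as well, so that $C_{ijk}=0$ by \eqref{Eq:Cartan_tensor}, whence $F$ is Riemannian by Deicke's theorem (valid in the positive-definite case; the paper's Remark after Theorem 3.5 flags the subtlety, but spherically symmetric metrics here are regular, i.e.\ positive-definite under the stated regularity conditions). To get $\mu_s=0$ from $\sigma_2=0$: recall $\sigma_2=(\phi-s\phi_s)\phi_s-s\phi\phi_{ss}$ and from \eqref{Identites_sigmas} that $(\sigma_0)_s=\sigma_2$ and $(\sigma_2)_s=-s\mu_s$. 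If $\sigma_2\equiv 0$ then $(\sigma_2)_s\equiv 0$, hence $s\mu_s\equiv 0$, and since $s\neq 0$ on the relevant domain (Remark after the $m_i$ definition), $\mu_s\equiv 0$. Therefore every coefficient in \eqref{Eq:Cartan_tensor} vanishes and $C_{ijk}\equiv 0$.

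Alternatively — and this may be cleaner, avoiding any appeal to Deicke — one can argue directly that $\sigma_2=0$ makes $g_{ij}$ independent of $y$: from $\sigma_2=0$ we get $\sigma_3=-s\sigma_2=0$ and, as just shown, $\sigma_1=\mu$ is a function of $r$ alone, while $\sigma_0$ satisfies $(\sigma_0)_s=\sigma_2=0$ so $\sigma_0=\sigma_0(r)$ too; then \eqref{Eq:g_ij} reduces to $g_{ij}=\sigma_0(r)\delta_{ij}+\sigma_1(r)x_ix_j$, which depends only on $x$, i.e.\ $F$ is a Riemannian metric. I would present this direct version as the main line and mention Deicke only as a remark.

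The main obstacle is purely bookkeeping: one must be careful that $\sigma_2=0$ as a function on the punctured ball genuinely propagates to $(\sigma_2)_s=0$ and then, using $s\neq 0$, to $\mu_s=0$ — i.e.\ that the identities in \eqref{Identites_sigmas} are being used in the right direction — and that the linear independence of $\{\delta_{ij},\,x_ix_j\}$ type expressions (or of the $\hbar$–$m$ monomials) is invoked correctly for $n\geq 2$. There is no hard analysis here; the only real content is recognizing that the single PDE $\sigma_2=0$ is equivalent to the pair $\sigma_2=\mu_s=0$, which is exactly what the algebraic identities \eqref{Identites_sigmas} deliver.
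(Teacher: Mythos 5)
Your argument is correct in substance but follows a genuinely different route from the paper. The paper proves the proposition as a single chain of equivalences by integrating the ODE $\sigma_2=0$ twice: $(\phi-s\phi_s)\phi_s-s\phi\phi_{ss}=0$ is rewritten as $s(\phi\phi_s)_s=\phi\phi_s$, giving $\phi\phi_s=c_1(r)s$ and hence $\phi^2=c_1(r)s^2+c_2(r)$, which is visibly the Riemannian form. You instead never solve for $\phi$: you use the differential identities \eqref{Identites_sigmas} to propagate $\sigma_2\equiv 0$ to $(\sigma_2)_s=-s\mu_s\equiv 0$, hence $\mu_s\equiv 0$ away from $s=0$ (and everywhere by continuity), and conclude that $\sigma_0,\sigma_1$ depend on $r$ alone while $\sigma_3=-s\sigma_2=0$, so \eqref{Eq:g_ij} collapses to $g_{ij}=\sigma_0(r)\delta_{ij}+\sigma_1(r)x_ix_j$, a $y$-independent tensor; this is a clean structural argument and, as you note, avoids Deicke entirely. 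What the paper's approach buys in exchange is the explicit expression $\phi=\sqrt{c_1(r)s^2+c_2(r)}$, which identifies exactly which Riemannian metrics arise. One small correction to your forward direction: the monomials $\hbar_{ij}m_k+\hbar_{jk}m_i+\hbar_{ik}m_j$ and $m_im_jm_k$ are linearly independent only for $n\geq 3$; when $n=2$ the angular metric has rank one and $\hbar_{ij}=m_im_j/m^2$, so $C_{ijk}=0$ only yields $3\sigma_2+\mu_s m^2=0$. This is easily repaired (e.g.\ for a Riemannian $F$ one has $\phi\phi_s=c_1(r)s$ and a direct computation gives $\sigma_2=\phi\phi_s-s(\phi\phi_s)_s=0$ in any dimension), and it does not affect your converse, which is the substantive direction; but as written the claim ``for $n\geq 2$'' is not accurate.
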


\begin{proof}
Let $F = u\,\phi(r,s)$ be a spherically symmetric Finsler metric. Then, we have
\begin{align*}
\sigma_2 = 0 &\;\Longleftrightarrow\; (\phi - s\phi_s)\phi_s - s\phi\,\phi_{ss} = 0 \\
&\;\Longleftrightarrow\; s(\phi\,\phi_s)_s = \phi\,\phi_s \\
&\;\Longleftrightarrow\; \frac{(\phi\,\phi_s)_s}{\phi\,\phi_s} = \frac{1}{s} \\
&\;\Longleftrightarrow\; \phi\,\phi_s = c_1(r)\,s \quad \text{(by integration)} \\
&\;\Longleftrightarrow\; \phi^2 = c_1(r)\,s^2 + c_2(r) \\
&\;\Longleftrightarrow\; \phi = \sqrt{c_1(r)\,s^2 + c_2(r)} \\
&\;\Longleftrightarrow\; F \text{ is Riemannian.}
\end{align*}
Here, $c_1(r)$ and $c_2(r)$ are arbitrary smooth functions of $r$.
\end{proof}

To investigate spherically symmetric Finsler metrics that satisfy the T-condition, that is, those for which the T-tensor vanishes, we begin by solving the associated differential equations. To facilitate this, we introduce a function $W(r,s)$ defined as follows:
$$W(r,s):=\frac{\phi_s}{\phi-s\phi_s} .$$

The function $W(s)$ serves to simplify and facilitate the solution of the  differential equations addressed in this section. Furthermore, the function $\phi$ is given by
\begin{equation}\label{EQ:W_1}
\phi(r,s)=\exp\left(\int_0^s \frac{W}{1+tW}dt\right).
\end{equation}

\begin{proposition}\label{Prop:Phi=0}
The following identity holds: 
$$2s+m^2\sigma_2\kappa=\frac{(\phi-s\phi_s)^2}{\phi(\phi-s\phi_s+m^2\phi_{ss})}\left(  W_s+\left(\frac{1}{s}+\frac{2s}{m^2}\right)W+\frac{2}{m^2}\right).$$
\end{proposition}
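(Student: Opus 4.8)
The plan is to verify the claimed identity by brute-force substitution, reducing everything to $\phi$ and its $s$-derivatives, and then to recognize the resulting expression as a consequence of the definition of $W$. First I would unwind all the abbreviations appearing on the left-hand side: recall $\sigma_2 = (\phi-s\phi_s)\phi_s - s\phi\phi_{ss}$, $\kappa = \rho_0 + \rho_3 m^2$ with $\rho_0 = \frac{1}{\phi(\phi-s\phi_s)}$ and $\rho_3 = -\frac{\phi_{ss}}{\phi(\phi-s\phi_s)(\phi-s\phi_s+m^2\phi_{ss})}$, and $m^2 = r^2-s^2$. Substituting these, the quantity $m^2\sigma_2\kappa$ becomes $\frac{m^2\sigma_2}{\phi(\phi-s\phi_s)}\bigl(1 - \frac{m^2\phi_{ss}}{\phi-s\phi_s+m^2\phi_{ss}}\bigr) = \frac{m^2\sigma_2(\phi-s\phi_s)}{\phi(\phi-s\phi_s)(\phi-s\phi_s+m^2\phi_{ss})}$, so that $2s + m^2\sigma_2\kappa = \frac{2s\,\phi(\phi-s\phi_s+m^2\phi_{ss}) + m^2\sigma_2}{\phi(\phi-s\phi_s+m^2\phi_{ss})}\cdot\frac{1}{1}$ after clearing — more precisely, over the common denominator $\phi(\phi-s\phi_s+m^2\phi_{ss})$ the numerator is $2s\,\phi(\phi-s\phi_s+m^2\phi_{ss}) + m^2\sigma_2$.

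Next I would expand the right-hand side over the same denominator. Multiplying through by $\phi(\phi-s\phi_s+m^2\phi_{ss})$, the identity reduces to the purely polynomial (in $\phi,\phi_s,\phi_{ss},\phi_{ss s}$, actually only up to second order once $W_s$ is expanded) claim
$$
2s\,\phi(\phi-s\phi_s+m^2\phi_{ss}) + m^2\sigma_2 \;=\; (\phi-s\phi_s)^2\!\left(W_s + \Bigl(\tfrac1s + \tfrac{2s}{m^2}\Bigr)W + \tfrac{2}{m^2}\right).
$$
Here I would use $W = \frac{\phi_s}{\phi - s\phi_s}$, so $(\phi-s\phi_s)^2 W = (\phi-s\phi_s)\phi_s$, and compute $W_s$ by the quotient rule: $W_s = \frac{\phi_{ss}(\phi-s\phi_s) - \phi_s(\phi_s - s\phi_s - s\phi_{ss})}{(\phi-s\phi_s)^2} = \frac{\phi_{ss}(\phi-s\phi_s) + s\phi_s\phi_{ss}}{(\phi-s\phi_s)^2} = \frac{\phi\,\phi_{ss}}{(\phi-s\phi_s)^2}$. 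Thus $(\phi-s\phi_s)^2 W_s = \phi\,\phi_{ss}$. The right-hand side therefore collapses to $\phi\,\phi_{ss} + \bigl(\tfrac1s + \tfrac{2s}{m^2}\bigr)(\phi-s\phi_s)\phi_s + \tfrac{2}{m^2}(\phi-s\phi_s)^2$. What remains is to check that this equals the left-hand side, and I would do this by expanding $\sigma_2 = (\phi-s\phi_s)\phi_s - s\phi\phi_{ss}$ on the left, grouping the $\phi\phi_{ss}$ terms, the $(\phi-s\phi_s)\phi_s$ terms, and the $(\phi-s\phi_s)^2$ terms separately, and verifying the coefficients match after multiplying out by $m^2 s$ to clear denominators; this is a short algebraic verification once the key simplification $(\phi-s\phi_s)^2 W_s = \phi\phi_{ss}$ is in hand.

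The main obstacle — really the only nontrivial point — is the computation of $W_s$ and the recognition that it cleans up to $\phi\phi_{ss}/(\phi-s\phi_s)^2$; the cancellation of the $\phi_s^2$ terms there is what makes the whole identity work and is easy to miss. Everything else is bookkeeping: keeping track of the common denominator $\phi(\phi-s\phi_s+m^2\phi_{ss})$ and correctly expanding $m^2\sigma_2\kappa$ using the explicit forms of $\rho_0$ and $\rho_3$. I would present the proof as: (i) simplify $m^2\sigma_2\kappa$ using $\kappa = \rho_0 + \rho_3 m^2$; (ii) compute $W_s$ and note $(\phi-s\phi_s)^2 W_s = \phi\phi_{ss}$, $(\phi-s\phi_s)^2 W = (\phi-s\phi_s)\phi_s$; (iii) clear denominators and match coefficients of $\phi\phi_{ss}$, $(\phi-s\phi_s)\phi_s$, $(\phi-s\phi_s)^2$ on both sides.
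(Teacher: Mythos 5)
Your route is the same as the paper's: reduce $\kappa=\rho_0+\rho_3m^2$ to $\frac{1}{\phi(\phi-s\phi_s+m^2\phi_{ss})}$, compute $W_s=\frac{\phi\,\phi_{ss}}{(\phi-s\phi_s)^2}$, and compare the two sides over the common denominator. Those two simplifications are correct and are indeed the only non-routine points. The problem is the step you defer as ``a short algebraic verification'': it does not close, because the identity as stated is off by a factor of $sm^2$. Carrying out your own plan, the left-hand numerator is
\begin{align*}
N &= 2s\,\phi(\phi-s\phi_s+m^2\phi_{ss})+m^2\sigma_2
= 2s\phi(\phi-s\phi_s)+sm^2\phi\phi_{ss}+m^2(\phi-s\phi_s)\phi_s\\
&= 2s(\phi-s\phi_s)^2+(m^2+2s^2)(\phi-s\phi_s)\phi_s+sm^2\phi\phi_{ss},
\end{align*}
whereas
\[
(\phi-s\phi_s)^2\Bigl(W_s+\bigl(\tfrac1s+\tfrac{2s}{m^2}\bigr)W+\tfrac{2}{m^2}\Bigr)
=\phi\phi_{ss}+\bigl(\tfrac1s+\tfrac{2s}{m^2}\bigr)(\phi-s\phi_s)\phi_s+\tfrac{2}{m^2}(\phi-s\phi_s)^2
=\frac{N}{sm^2}.
\]
So the two sides of your displayed polynomial identity differ by exactly $sm^2$; your phrase ``after multiplying out by $m^2s$ to clear denominators'' is where this factor gets silently absorbed into one side only. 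A coefficient match performed honestly (multiplying \emph{both} sides by $m^2s$) would fail.

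To be fair, the paper's own proof commits the identical slip in its last line (its penultimate display has the bracket $\frac{2s\phi}{\phi-s\phi_s}+\frac{m^2\phi_s}{\phi-s\phi_s}+\frac{sm^2\phi\phi_{ss}}{(\phi-s\phi_s)^2}=sm^2\bigl(W_s+(\tfrac1s+\tfrac{2s}{m^2})W+\tfrac2{m^2}\bigr)$, and the factor $sm^2$ is dropped). The correct statement is
\[
2s+m^2\sigma_2\kappa=\frac{s\,m^2\,(\phi-s\phi_s)^2}{\phi(\phi-s\phi_s+m^2\phi_{ss})}\left(W_s+\Bigl(\frac1s+\frac{2s}{m^2}\Bigr)W+\frac{2}{m^2}\right),
\]
which is what you should prove; since $s\neq0$ and $m^2\neq0$, the extra factor is harmless for the only use of the proposition (deducing the ODE for $W$ from $\Phi=0$ in Theorem \ref{Theorem_T-condition}). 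Also, a minor point: your quotient-rule line for $W_s$ miswrites the derivative of $\phi-s\phi_s$ (it is $-s\phi_{ss}$), though your final expression for $W_s$ is right.
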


\begin{proof}
Using the formulae of $\sigma_2$ and $\kappa$, we have
\begin{align*}
2s+m^2\sigma_2\kappa=&  2s+\frac{m^2((\phi-s\phi_s)\phi_s-s\phi\phi_{ss})}{\phi(\phi-s\phi_s+m^2\phi_{ss})}\\
=& \frac{2s(\phi(\phi-s\phi_s+m^2\phi_{ss}))+m^2((\phi-s\phi_s)\phi_s-s\phi\phi_{ss})}{\phi(\phi-s\phi_s+m^2\phi_{ss})}\\
=& \frac{2s\phi(\phi-s\phi_s)+m^2(\phi-s\phi_s)\phi_s}{\phi(\phi-s\phi_s+m^2\phi_{ss})}\\
=&\frac{(\phi-s\phi_s)^2}{\phi(\phi-s\phi_s+m^2\phi_{ss})}\left( \frac{2s \phi}{\phi-s\phi_s}+\frac{m^2 \phi_s}{\phi-s\phi_s}+\frac{sm^2\phi\phi_{ss}}{(\phi-s\phi_s)^2}\right)\\
=&\frac{(\phi-s\phi_s)^2}{\phi(\phi-s\phi_s+m^2\phi_{ss})}\left(  W_s+\left(\frac{1}{s}+\frac{2s}{m^2}\right)W+\frac{2}{m^2}\right).
\end{align*}
\end{proof}

\begin{theorem}\label{Theorem_T-condition}
A spherically symmetric Finsler metric $F=u\phi(r,s)$ with $n \geq 3$ satisfies the $T$-condition if and only if either the metric is Riemannian or    $\phi$ is given by
\begin{equation}\label{berwald}
\phi(r,s) =
a(r) \, s^{\frac{c(r)  r^2 - 1}{c(r)  r^2}} ( r^2 -  s^2)^{\frac{1}{2 c(r) r^2}}.
\end{equation}
\end{theorem}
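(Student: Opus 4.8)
The plan is to impose $T_{hijk}=0$ on the explicit formula from Proposition \ref{Lem:T-tensor_SSFM}. Since the tensors $\hbar_{ij}$ and $m_i$ are algebraically independent in the relevant sense (for $n\geq 3$ the three families of symmetrized products $\hbar\hbar$, $\hbar mm$, and $mmmm$ are linearly independent as tensors in the indices $h,i,j,k$), the vanishing of $T_{hijk}$ is equivalent to the simultaneous vanishing of the three scalar coefficients $\Phi=\Psi=\Omega=0$. So the first step is to record this reduction carefully, perhaps contracting with $y^i$ and with $m^i$ a few times to isolate $\Phi$, then $\Psi$, then $\Omega$, to justify the independence rigorously.

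Next I would analyze $\Phi=0$. From \eqref{Eq:PhiFactored}, $\Phi=-\frac{\phi\sigma_2}{4u}\bigl(2s+\sigma_2 m^2\kappa\bigr)$, so either $\sigma_2=0$ — which by Proposition \ref{Prop:sigma_2} means $F$ is Riemannian, giving the first alternative — or $2s+m^2\sigma_2\kappa=0$. In the latter case Proposition \ref{Prop:Phi=0} converts this into the first-order linear ODE in $s$ (with $r$ a parameter)
\[
W_s+\left(\frac{1}{s}+\frac{2s}{m^2}\right)W+\frac{2}{m^2}=0,
\]
where $m^2=r^2-s^2$. This is an explicitly solvable linear ODE: the integrating factor is $\mu(s)=\dfrac{s}{m^2}=\dfrac{s}{r^2-s^2}$ (up to the $r$-dependent normalization), and integrating gives $W$ in closed form with one arbitrary function $c(r)$ of integration. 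Substituting this $W$ into \eqref{EQ:W_1} and carrying out the quadrature $\phi=\exp\!\bigl(\int_0^s \frac{W}{1+tW}\,dt\bigr)$ should produce exactly the claimed expression \eqref{berwald}, with $a(r)$ absorbing the overall multiplicative constant of integration and $c(r)$ the constant from solving the $W$-equation. This is the computational heart of the forward direction.

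The remaining — and most delicate — point is to check that once $\Phi=0$ is solved by the family \eqref{berwald}, the other two conditions $\Psi=0$ and $\Omega=0$ are automatically satisfied and impose no further restriction on $a(r)$ and $c(r)$. I expect this to be the main obstacle: one must substitute $\phi$ from \eqref{berwald} (or, more cleverly, the corresponding $W=\phi_s/(\phi-s\phi_s)$, which is a simple rational function of $s$) into the formulas \eqref{Eq:PsiFactored} and \eqref{Eq:OmegaFactored} and verify the vanishing. The efficient route is to differentiate the relation $2s+m^2\sigma_2\kappa=0$ with respect to $s$ and use it to express $\Psi$ and $\Omega$ as multiples of $\Phi$ and its $s$-derivative — i.e. to show that $\Psi$ and $\Omega$ lie in the differential ideal generated by $\Phi$, so that $\Phi\equiv 0$ in $s$ forces $\Psi=\Omega=0$. (This is plausible on structural grounds, since $T_{hijk}$ is totally symmetric and built from $C$ and $\dot\partial C$, so a single scalar PDE on $\phi$ ought to govern it; but it must be verified.) For the converse direction one simply notes that Riemannian metrics have $C_{ijk}=0$ hence $T_{hijk}=0$, while for the family \eqref{berwald} the computation just described shows $\Phi=\Psi=\Omega=0$. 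I would also remark that the cases $s=0$ and $r^2-s^2=0$ are excluded by the Remark following the regularity discussion, so dividing by $s$ and $m^2$ throughout is legitimate.
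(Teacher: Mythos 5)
Your proposal follows essentially the same route as the paper: reduce $T_{hijk}=0$ to $\Phi=\Psi=\Omega=0$ via contractions (the paper contracts with $x^h$, $x^ix^j$ and $\delta^{ij}$, using $n\geq 3$), factor $\Phi$ to split into the Riemannian case $\sigma_2=0$ versus $2s+m^2\sigma_2\kappa=0$, convert the latter to the linear ODE for $W$ via Proposition \ref{Prop:Phi=0}, and integrate \eqref{EQ:W_1} to obtain \eqref{berwald}. The one step you flag as delicate --- that \eqref{berwald} forces $\Psi=\Omega=0$, which is needed for the converse --- is exactly the step the paper also does not carry out by hand but delegates to a Maple computation, so your plan is faithful to the published argument.
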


\begin{proof}
Assume that a spherically symmetric Finsler metric $F=u\phi(r,s)$ satisfying the T-condition, that is,  $T_{hijk}=0$.  Then we have
 \begin{align}
 \label{T=0}
\nonumber &  \Phi (\hbar_{hi}\hbar_{jk}+\hbar_{hj}\hbar_{ik}+h_{hk}\hbar_{ij})\\
   &+\Psi(\hbar_{hk}m_im_j+\hbar_{hj}m_im_k+\hbar_{hi}m_jm_k+\hbar_{ij}m_hm_k+\hbar_{jk}m_im_h+\hbar_{ik}m_jm_h)\\
 \nonumber   &+\Omega\, m_hm_im_jm_k  =0  .
\end{align}
Contracting \eqref{T=0} by $x^h$, we get
\begin{equation}
\label{Eq_1}
A (\hbar_{jk}m_i+\hbar_{ik}m_j+\hbar_{ij}m_k)+B m_im_jm_k=0.
\end{equation}
  Where $A$ and $B$ are scalar functions defined by
  $$A:=\Phi+m^2\Psi, \quad B:=3\Psi+m^2\Omega$$
  Contracting \eqref{Eq_1} by $x^i x^j$, we get
  $$3A m^2  m_k+ B m^4 m_k=0.$$
  Since $m^2\neq0$ and $m_k\neq0$,  we have    
  \begin{equation}
  \label{Eq:T-condition_1}
 3 A+m^2 B=0.
  \end{equation}
  
  Similarly,  contracting \eqref{Eq_1}  by $\delta^{ij}$, we get

   \begin{equation}
  \label{Eq:T-condition_2}
 (n+1) A+m^2 B=0.
  \end{equation}
  Solving \eqref{Eq:T-condition_1} and \eqref{Eq:T-condition_2} for $A$ and $B$ using the fact that $n\geq3$, we get 
  $$A=0, \quad B=0.$$
  
 Therefore, we have
\begin{eqnarray}\label{T=0_1}
  \Phi+m^2\Psi=0,\quad 3\Psi+m^2\Omega=0.
\end{eqnarray}

Again, contraction \eqref{T=0} by $\delta^{hi}$, we obtain
$$
  ((n+1)\Phi+m^2\Psi) \hbar_{jk}+((n+3)\Psi+m^2\Omega)m_jm_k=0.
$$
Following a similar procedure as before, contacting  the above equation by $x^jx^k$, we get 
\begin{equation}
\label{Eq_3}
\mathcal{A}  +m^2\mathcal{B}=0
\end{equation}
where $\mathcal{A}:=(n+1)\Phi+m^2\Psi$, $\mathcal{B}:=(n+3)\Psi+m^2\Omega
$. 
Also, contraction by $\delta^{jk}$ yields 
\begin{equation}
\label{Eq_4}
(n-1)\mathcal{A}  +m^2\mathcal{B}=0.
\end{equation}
Now, solving  \eqref{Eq_3} and \eqref{Eq_4},  taking into account that $n\geq 3$ into account, we have
\begin{eqnarray}\label{T=0_2}
 \mathcal{A}= (n+1)\Phi+m^2\Psi=0, \quad \mathcal{B}=(n+3)\Psi+m^2\Omega=0.
\end{eqnarray}
 Now, solving the equations \eqref{T=0_1} and \eqref{T=0_2} for $\Phi$, $\Psi$ and $\Omega$, we have $\Phi=0$, $\Psi=0$ and $\Omega=0$.

That is, we have $\Phi = 0$. Hence, we have either $\sigma_2=0$ or  $2s+m^2\sigma_2\kappa=0$.
By Proposition \ref{Prop:sigma_2}, if $\sigma_2=0$, then $F$ is Riemannian. Now, let $2s+m^2\sigma_2\kappa=0$,
taking into account that $\phi - s\phi_s \neq 0$, then we have    
\[
W_s + \left( \frac{1}{s} + \frac{2s}{m^2} \right)W = -\frac{2}{m^2}.
\]
This is a first-order linear differential equation which has the solution
\[
W = \frac{c b^2 - 1}{s} - c s = \frac{c(b^2 - s^2) - 1}{s}, \quad c \text{ is a constant}.
\]
Hence,
\[
1 + sW = c b^2 - c s^2,
\qquad
\frac{W}{1 + sW} = \frac{1}{s} - \frac{1}{c s(b^2 - s^2)}.
\]

Using equation~\eqref{EQ:W_1}, the function $\phi(s)$ is given by \eqref{berwald}.

\medskip

Conversely, let $\Phi=0$, then we have either $\sigma_2=0$ or $2s+\kappa \sigma_2 m^2=0$. If $\sigma_2=0$, then  the space is Riemannian,  and hence $\Psi=0$ and $\Omega=0$. And if  $2s+\kappa \sigma_2 m^2=0$, then we get the explicit formula \eqref{berwald} of $\phi$. Now,  substituting this expression into $\Psi$ and $\Omega$ yields $\Psi = 0$ and $\Omega = 0$.    Calculations to verify the vanishing of $\Psi$ and $\Omega$ are available at\\
\url{https://github.com/salahelgendi/Calculations_T-tensor_SSFM.git}
\end{proof}

   \bigskip

\noindent\textbf{Declarations}\\
\begin{itemize}
\item \textbf{Competing interests}: The author  declares no conflict of interest.
  \item \textbf{Availability of data and material}: Not applicable.
  \item \textbf{Funding}: Not applicable.
\end{itemize}

%%%%%%%%%%%%%%%%%%%%%%%%%%%%%%%%%%%%%%%%%%%%%%%%%%%%%%%%%%%%%%%%%%%%%%%%%%%%%%%%%%%%%%%%%%%%
\providecommand{\bysame}{\leavevmode\hbox
to3em{\hrulefill}\thinspace}
\providecommand{\MR}{\relax\ifhmode\unskip\space\fi MR }
% \MRhref is called by the amsart/book/proc definition of \MR.
\providecommand{\MRhref}[2]{%
  \href{http://www.ams.org/mathscinet-getitem?mr=#1}{#2}
} \providecommand{\href}[2]{#2}

\end{document}